\documentclass[11pt]{amsart}
\usepackage{amsmath, amsthm, amssymb}
\usepackage{graphics}
\usepackage{natbib}
\usepackage{graphicx}
\usepackage{xcolor}
\usepackage[utf8]{inputenc}
\usepackage{quiver}
\usepackage{hyperref}
\usepackage{stmaryrd} 
\usepackage{yhmath} 

\newtheorem{theorem}{Theorem}[section]
\theoremstyle{definition} 
\newtheorem{definition}[theorem]{Definition} 
\theoremstyle{definition} 
\newtheorem{lemma}[theorem]{Lemma} 
\theoremstyle{definition} 
\newtheorem{proposition}[theorem]{Proposition} 
\theoremstyle{definition} 

\theoremstyle{definition} 
\newtheorem{corollary}[theorem]{Corollary} 
\theoremstyle{definition}

\theoremstyle{definition} 
 
\newtheorem*{acknowledgements}{Acknowledgements}

\newcommand{\NN}{\mathbb{N}}

\newcommand{\CC}{\mathbb{C}}
\newcommand{\m}{\mathfrak{m}}
\newcommand{\Spec}{\text{Spec }}
\newcommand{\p}{\mathfrak{p}}
\newcommand{\Quot}{\mathrm{Quot}}

\subjclass[2020]{14J17, 14B07, 14M25, 13A50}

\keywords{deformations of singularities, quotient singularities, toric singularities}

\begin{document}

\title[deformations of isolated cyclic quotient singularities]{Deformations of isolated cyclic quotient singularities in arbitrary characteristic}

\author{Matthias Pfeifer}
\address{TU M\"unchen, Zentrum Mathematik - M11, Boltzmannstr. 3, 85748 Garching bei M\"unchen, Germany}
\email{m.pfeifer@tum.de}

\begin{abstract}
We show that toric surface singularities deform to toric surface singularities -  both in equal and mixed characteristic. As an application, we establish Riemenschneiders conjecture that isolated cyclic quotient singularities of any dimension deform to isolated cyclic quotient singularities in equal and mixed characteristic.
\end{abstract}

\maketitle
\setcounter{tocdepth}{1}
\tableofcontents

\section{Introduction}
In \cite{Rie74}, Riemenschneider calculated deformations of 2-dimensional cyclic quotient singularities over $\CC$ with low embedding dimension and conjectured that isolated cyclic quotient singularities deform to isolated cyclic quotient singularities. In dimension $\geq 3$ over $\CC$, this was already known, as isolated cyclic quotient singularities in dimension $\geq 3$ over $\CC$ are rigid by work of Schlessinger, \cite{Sch71}. The dimension 2 case over $\CC$ was proven by Koll{\'a}r and Shepherd-Barron in \cite{KSB88} by counting intersections of exceptional curves on a minimal resolution.

More generally, Riemenschneider additionally conjectured that finite quotient singularities deform to finite quotient singularities. This was proven over $\CC$ by Esnault and Viehweg in \cite{EV85}. In positive and mixed characteristic however, one needs to be careful. In \cite[Examle 3.10.8]{LMM25}, Liedtke, Martin and Matsumoto give an example of a quotient singularity deforming to a non-quotient singularity. They propose that the correct generalization of quotient singularities to positive and mixed characteristic in this context are linearly reductive quotient singularities. A slight reformulation of Riemenschneiders conjecture is given in \cite[Conjecture 2.12.1]{LMM25}, claiming that lrq singularities deform to lrq singualrities. This was proven by Sato and Takagi in \cite{ST21}.

This objective of this article is to establish Riemenschneiders conjecture on isolated cyclic quotient singularities in full generality, using cyclic lrq singularities.

\begin{definition}[\protect{\cite[Definition 2.6.5]{LMM25}}]

A \emph{cyclic linearly reductive quotient singularity}, or \emph{cyclic lrq singularity} for short, is a quotient singularity by the group-scheme $\mu_n$.
\end{definition}

Note that in characteristic zero this is equivalent to the classical definition of cyclic quotient singularities as quotients by finite cyclic groups. We refer to \cite{LMM25} for a detailed description of (cyclic) lrq singularities.

Our main result is the following.
\begin{theorem}[Theorem \ref{thm:cyc_to_cyc}]
Let $X$ be an isolated cyclic lrq singularity and let $\mathcal{X}$ be a proper deformation of $X$ over a complete DVR $S$. Then the geometric generic fiber $\mathcal{X}_{\overline{\eta}}$ of $\mathcal{X}\to \Spec S$ contains at worst isolated cyclic lrq singularities.
\end{theorem}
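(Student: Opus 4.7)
My plan is to split the argument by the dimension of $X$, applying the paper's main toric surface deformation theorem in dimension two and a rigidity argument in dimension at least three. As a first reduction, since $X$ has an isolated singularity and $\mathcal{X} \to \Spec S$ is proper, the relative singular locus of $\mathcal{X}/S$ is proper of relative dimension zero, hence finite over $S$; after completing along the singular section, the problem reduces to classifying the deformations of the isolated local singularity $X$ over $S$. Moreover, a cyclic lrq singularity, being by definition a $\mu_n$-quotient of a smooth affine scheme, is in particular an affine toric singularity, which places it within the scope of the toric deformation machinery developed in the paper.

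In the two-dimensional case I would apply the paper's main theorem directly: toric surface singularities deform to toric surface singularities in equal and mixed characteristic. This shows that every singular point of $\mathcal{X}_{\overline{\eta}}$ is a toric surface singularity. To upgrade ``toric'' to ``cyclic lrq'' I would then invoke the two-dimensional toric classification, according to which every affine toric surface singularity has the form $\mathbb{A}^2/\mu_n$ for some $n$ and some primitive weight --- a statement about two-dimensional cones in $\mathbb{Z}^2$ that is independent of the characteristic of the base. Isolation of the deformed singularities is automatic from the finiteness of the relative singular locus over $S$.

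In dimension $d \geq 3$, I would prove (or quote) that isolated cyclic lrq singularities are rigid, i.e., their cotangent cohomology $T^1$ vanishes. The relevant computation can be carried out using toric methods for the $\mu_n$-quotient and reduces to monomial combinatorics that do not depend on the residue characteristic, so the same vanishing holds in equal and mixed characteristic. Rigidity then forces the formal deformation of $X$ over $S$ to coincide with the canonical lift $\mathbb{A}^d_S/\mu_{n,S}$, whose geometric generic fiber is itself an isolated cyclic lrq singularity. The main obstacle is the two-dimensional case, which rests entirely on the paper's toric surface deformation theorem; in dimension $\geq 3$ the argument should be essentially formal once rigidity is verified. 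A secondary, subtler point in dimension two is ensuring that the toric singularity on the generic fiber genuinely carries a $\mu_n$-action in positive and mixed characteristic, rather than merely being toric in the combinatorial sense.
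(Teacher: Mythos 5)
Your proposal follows essentially the same route as the paper: dimension $\geq 3$ is handled by rigidity of isolated (linearly reductive) quotient singularities (which the paper quotes from \cite{Sch71} in characteristic zero and \cite{LMM25} in positive and mixed characteristic, rather than redoing a $T^1$ computation), and dimension $2$ by combining Theorem \ref{thm:toric_to_toric} with Proposition \ref{prop:toric_eq_cyclic}. Your final worry about the generic-fiber toric singularity carrying a genuine $\mu_n$-action is exactly what Proposition \ref{prop:toric_eq_cyclic} settles, since the geometric generic fiber lives over an algebraically closed field.
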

Note that if $S$ is of equal characteristic zero, this recovers the result from \cite{KSB88}.

The proof additionally shows semicontinuity of the embedding dimension and length of the associated group schemes.

\begin{proposition}[Proposition \ref{prop:semicontinuity}]
The embedding dimension as well as the length of the associated group schemes of singularities in $\mathcal{X}_{\overline{\eta}}$ is bounded above by the respective numbers for the singularity $X$.
\end{proposition}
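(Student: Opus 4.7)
My plan is to treat the two invariants separately and to extract both bounds from the structure that is used to prove Theorem~\ref{thm:cyc_to_cyc}. The embedding dimension part is essentially formal, while the length of the group scheme part encodes the geometry of the canonical cover of a cyclic lrq singularity.

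For the embedding dimension, I would choose generators $f_1,\dots,f_N$ of $\mathfrak{m}_{X,x}/\mathfrak{m}_{X,x}^2$, lift them to $\mathcal{O}_{\mathcal{X},x}$, and observe that by flatness of $\mathcal{X}/S$ and Nakayama they generate $\mathfrak{m}_{\mathcal{X},x}$. Hence, after passing to a suitable \'etale (or formal) neighborhood, they define a closed embedding of a neighborhood of $x$ in $\mathcal{X}$ into $\mathbb{A}^N_S$. The generic fiber of this local model embeds into $\mathbb{A}^N_{\overline{\eta}}$, so every point of $\mathcal{X}_{\overline{\eta}}$ close to $x$ has embedding dimension at most $N$. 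Properness of $\mathcal{X}\to S$ ensures that every singularity of $\mathcal{X}_{\overline{\eta}}$ specializes (possibly after base change on $S$) to a point of the special fiber, i.e.\ to $x$, so the bound applies to every singularity of $\mathcal{X}_{\overline{\eta}}$.

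For the length of the associated group scheme, the argument must use more structure. A cyclic lrq singularity $Y=[\mathbb{A}^d/\mu_m]$ with isolated singularity at $y$ carries a canonical $\mu_m$-torsor on the punctured spectrum $Y\setminus\{y\}$, and the integer $m$ is recovered as the order of this torsor in $H^1_{fppf}(Y\setminus\{y\},\mu_m)$, or equivalently as the order of a canonical cyclic subgroup of the local divisor class group at $y$. My plan is to extend the canonical $\mu_n$-torsor over $X\setminus\{x\}$ to a $\mu_n$-torsor over $\mathcal{X}\setminus\{x\}$ (after shrinking to a formal neighborhood) and then restrict to the geometric generic fiber. Each singularity $y$ of $\mathcal{X}_{\overline{\eta}}$ then receives a $\mu_n$-cover which must factor through its own canonical $\mu_m$-cover, forcing $m\mid n$ and in particular $m\le n$.

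The main obstacle is the extension of this torsor across the family. In equal characteristic zero one can invoke purity of the branch locus together with Flenner's computation of the local \'etale fundamental group, but when $p\mid n$ the group scheme $\mu_n$ is no longer \'etale and one has to work with flat-but-nonsmooth torsors. I expect that in the toric setting of Theorem~\ref{thm:cyc_to_cyc}, where $\mathcal{X}$ is locally realised as a toric family over $S$, the extension is essentially tautological: the canonical cover of a toric cyclic quotient singularity is itself toric, and deforming the cone data produces a toric cover of the deformation. Flatness of the cover over $S$ then follows from the linear reductivity of $\mu_n$, and restriction to $\mathcal{X}_{\overline{\eta}}$ delivers the required divisibility of group scheme lengths.
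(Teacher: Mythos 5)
Your first paragraph (embedding dimension) is fine and is essentially the paper's own argument: lift generators of $\m_{X,x}/\m_{X,x}^2$ over the complete DVR, so that the relevant singularity-deformation, and hence the completed local ring at any point of the geometric generic fiber specializing to $x$, is a quotient of a power series ring in at most $e$ variables (this is Lemma \ref{lem:emb_dim_doesnt_increase_sing} combined with Theorem \ref{thm:proper_to_sing_def}, i.e.\ Corollary \ref{cor:emb_dim_dosnt_increase_prop}).

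The group-scheme part, however, has a genuine gap, and it sits exactly where you flag it. Your argument needs the canonical $\mu_n$-torsor on $X\setminus\{x\}$ to extend to a $\mu_n$-torsor on $\mathcal{X}\setminus\{x\}$ (equivalently, the deformation must be realized as a $\mu_n$-quotient of a deformation of the smooth cover). This is not ``essentially tautological'': the total space $\mathcal{X}$ of an arbitrary proper (or singularity-) deformation is \emph{not} a toric family over $S$ -- nothing in the hypotheses gives you toric cone data deforming over $S$, and if such structure were available the whole problem would trivialize. In mixed or positive characteristic with $p\mid n$, purity of the branch locus and \'etale fundamental group arguments do not apply to $\mu_n$-torsors, and extending such flat torsors across a deformation is precisely the hard content of the Sato--Takagi theorem on lrq singularities, which you would be re-proving rather than citing a finished step. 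Moreover, even granting the extension, restricting it to a punctured neighborhood of a singularity $y$ of $\mathcal{X}_{\overline{\eta}}$ only forces $m\mid n$ if you additionally know that the restricted torsor dominates the canonical $\mu_m$-cover of $y$ (e.g.\ because the total space of the extended cover is regular); an arbitrary $\mu_n$-torsor on the punctured spectrum of $y$ (for instance the trivial one) does not factor through the canonical cover, and you never address this. The paper avoids all of this: both bounds are read off from the explicit normal form of Theorem \ref{thm:shifting} and the computation in Theorem \ref{thm:toric_to_toric} -- the generic-fiber singularity is $X(b_1,\dots,b_{e'})$ with $e'\le e$ and $b_i\le a_{i+l}$ (since $b_i-1$ is the lowest $x_i$-degree in $h_i$ and $\deg_{x_i}h_i<a_i$), and $n'\le n$ then follows from the monotonicity of the order $n$ in the continued-fraction data $(e;a_2,\dots,a_{e-1})$. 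That explicit route also yields the finer exponentwise semicontinuity, which your torsor argument could not recover.
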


Most of this article is phrased in terms of toric surface singularities. This is justified by the following proposition.

\begin{proposition}[Proposition \ref{prop:toric_eq_cyclic}; \protect{\cite[Proposition 2.7.4]{LMM25}}]
Let $x\in X$ be a two-dimensional isolated singularity over an algebraically closed field. Then the following are equivalent:
\begin{itemize}
\item{(1)} $x\in X$ is toric,
\item{(2)} $x\in X$ is a cyclic lrq singularity.
\end{itemize}
\end{proposition}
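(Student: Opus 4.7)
The claim is affine-local at $x$, so I assume $X = \Spec R$ with $(R, \m, k)$ a two-dimensional complete local ring; both directions will reduce to identifying $R$ with a semigroup algebra $k[\sigma^\vee \cap M]$, respectively with an invariant ring $k\llbracket u, v \rrbracket^{\mu_n}$ under a diagonal action.

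For $(2) \Rightarrow (1)$: write $R = k\llbracket u, v \rrbracket^{\mu_n}$ for a $\mu_n$-action fixing the closed point. Because $\mu_n$ is diagonalizable, every finite-dimensional $\mu_n$-representation over $k$ splits canonically into character spaces, so the induced action on $\m / \m^2$ diagonalizes. An equivariant Cohen structure theorem then lifts this to a $\mu_n$-equivariant isomorphism under which $u, v$ are eigenvectors of weights $a, b \in \ZZ / n \ZZ$, with $\gcd(a, n) = \gcd(b, n) = 1$ forced by the isolated fixed-point hypothesis. The invariant subring is spanned by monomials $u^i v^j$ with $ai + bj \equiv 0 \pmod{n}$, which is the completion of the semigroup algebra on $S = \{(i, j) \in \NN^2 : ai + bj \equiv 0 \pmod{n} \}$; since $S$ is the intersection of $\NN^2$ with a rational strongly convex cone, $R$ is toric.

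For $(1) \Rightarrow (2)$: by the classification of affine toric varieties, $R \cong k[\sigma^\vee \cap M]$ for a strongly convex rational cone $\sigma \subset N_\RR \cong \RR^2$, which the isolated singularity assumption forces to be two-dimensional and non-smooth. Applying Hermite normal form to the primitive ray generators of $\sigma$ yields a $GL_2(\ZZ)$ change of basis in which they become $e_2$ and $n e_1 - q e_2$ for unique integers $0 < q < n$ with $\gcd(n, q) = 1$. A direct computation identifies $k[\sigma^\vee \cap M]$ with the ring of $\mu_n$-invariants in $k[x, y]$ under the diagonal action of weights $(1, q)$, whence $X \cong \mathbb{A}^2 / \mu_n$. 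The main obstacle is the equivariant linearization used in the first direction: in characteristic zero one could argue by averaging, but in mixed or positive characteristic this must be replaced by the combination of diagonalizability of $\mu_n$-representations with an equivariant Cohen structure theorem for complete local rings carrying a $\mu_n$-action; once this input is available, the translation between monomial semigroup algebras, invariant rings under diagonal actions, and cones in lattices is formal and works uniformly in all characteristics.
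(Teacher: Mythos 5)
The paper does not actually prove this proposition: it is quoted verbatim from \cite[Proposition 2.7.4]{LMM25}, so your proposal should be compared with the argument there rather than with anything in this article. Your route is essentially the classical toric/cyclic-quotient correspondence upgraded to the group scheme $\mu_n$, which is also the substance of the cited result: for $(1)\Rightarrow(2)$ you put the two ray generators in Hermite normal form and identify $k[\sigma^\vee\cap M]$ with the $\mu_n$-invariants of a diagonal action of weights $(1,q)$, and for $(2)\Rightarrow(1)$ you linearize the $\mu_n$-action on $k\llbracket u,v\rrbracket$ and read off the invariant ring as a completed semigroup algebra on $S=\{(i,j)\in\NN^2: ai+bj\equiv 0 \bmod n\}$. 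This is correct in outline and works in all characteristics precisely because $\mu_n$ is diagonalizable; you are right that the one genuinely nontrivial input is the equivariant linearization (there is no averaging when $p\mid n$), and citing a linearization theorem for linearly reductive group scheme actions on complete local rings is the appropriate move — that is where the content of \cite{LMM25} lies.

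Two small corrections. First, your claim that isolatedness forces $\gcd(a,n)=\gcd(b,n)=1$ is not right: e.g.\ weights with $\gcd(a,n)=d>1$ still have $\mu_n$-fixed locus equal to the origin as long as $a,b\not\equiv 0 \bmod n$, and in dimension two any normal quotient is automatically an isolated singularity, so isolatedness detects nothing here. Fortunately this claim is not needed: for arbitrary weights $a,b$ the monoid $S$ is the intersection of $\NN^2$ with the sublattice $\{ai+bj\equiv 0\}$, hence saturated, so $k[S]$ is a normal toric surface regardless; subgroup schemes acting as pseudo-reflections only mean the resulting toric ring may be smooth or may be rewritten as a quotient by a smaller $\mu_m$ acting without pseudo-reflections, which is the standard reduction one should state instead. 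Second, in $(2)\Rightarrow(1)$ you should say a word about why the formal-local model is $k\llbracket u,v\rrbracket^{\mu_n}$ with the closed point fixed (passing from the \'etale-local quotient presentation to completions and replacing $\mu_n$ by the stabilizer), and note that taking $\mu_n$-invariants commutes with completion; both are routine for a diagonalizable group scheme but belong in a complete write-up. The aside about mixed characteristic is vacuous here, since everything happens over the algebraically closed residue field.
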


The proof of the main theorem uses singularity-deformations, defined in section \ref{sec:deformations}. This type of deformation allows us to reduce questions about singularities of proper deformations to questions about complete local rings. 

We refer to section \ref{sec:tor_surf_sings} and \ref{sec:deformations} for details on toric surface singularities and deformations respectively.

Section \ref{sec:tor_surf_sings} is a summary of facts about toric surface singularities and formulates a classification of isolated toric surface singularities used in the proof of the main theorem. This is followed by section \ref{sec:deformations}, where the notion of proper deformations and singularity-deformations are established. Section \ref{sec:shifting} is the technical heart of this article. There, we introduce automorphisms of a singularity-deformation of a toric surface singularity that allow us to greatly simplify the equations defining the deformation. These simplified equations allow us to explicitly calculate all singularities appearing in the generic fiber of a deformation, which is done in section \ref{sec:sing_of_def}, establishing the main theorem.\\

\begin{acknowledgements}
I am very grateful to my doctoral-advisor Christian Liedtke for introduction to this topic and his continued guidance and support. I would also like to thank Gebhard Martin for helpful discussion.
\end{acknowledgements}


\section{Toric Surface Singularities}\label{sec:tor_surf_sings}
In this section, we recall some basic facts about toric surface singularities and end with a theorem giving a new classification of isolated toric surface singularities.\\

By definition, a toric surface singularity is a singularity appearing in a toric surface, and any singularity formally isomorphic to it. We additionally assume the toric surface to be normal, so that it can be constructed in the usual way from a fan. This immediately implies that the singularity is isolated.

First, let us note that this paper can just as well be read as a paper about 2-dimensional isolated cyclic lrq singularities.
\begin{proposition}[\protect{\cite[Proposition 2.7.4]{LMM25}}]\label{prop:toric_eq_cyclic}
Let $x\in X$ be a two-dimensional isolated singularity over an algebraically closed field. Then the following are equivalent:
\begin{itemize}
\item{(1)} $x\in X$ is toric,
\item{(2)} $x\in X$ is a cyclic lrq singularity.
\end{itemize}
\end{proposition}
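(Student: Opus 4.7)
The plan is to verify both implications via the standard dictionary between rational polyhedral cones in $\RR^2$ and cyclic quotients of $\mathbb{A}^2$. For $(1)\Rightarrow(2)$, after a change of basis on the lattice $N$ I would put the cone $\sigma$ defining the toric singularity into the normal form $\sigma = \langle e_2,\; n e_1 - q e_2\rangle$ with $\gcd(n,q)=1$ and $0<q<n$ (the case $n=1$ being smooth, hence excluded by the singularity hypothesis). Computing the dual cone, the semigroup ring $k[\sigma^\vee\cap M]$ is precisely the subring of $k[x,y]$ spanned by the monomials $x^i y^j$ with $i+qj\equiv 0\pmod n$, which is the invariant ring of the $\mu_n$-action on $\mathbb{A}^2$ with weights $(1,q)$. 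Hence $x\in X$ is a cyclic lrq singularity by definition.

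For $(2)\Rightarrow(1)$, write $X=Y/\mu_n$ with $Y$ smooth and let $y\in Y$ be a closed point fixed by $\mu_n$ mapping to $x$. Because $\mu_n$ is diagonalizable, and therefore linearly reductive in every characteristic, every $\mu_n$-representation decomposes as a direct sum of characters; in particular the cotangent space $\m_y/\m_y^2$ does. I would lift a weight basis of $\m_y/\m_y^2$ to an equivariant regular system of parameters $u,v\in\widehat{\mathcal{O}}_{Y,y}$, using that the surjection $\m_y\twoheadrightarrow\m_y/\m_y^2$ splits $\mu_n$-equivariantly and that this lift extends inductively through the $\m_y$-adic filtration by linear reductivity. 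The action then takes the form $\zeta\cdot u=\zeta^a u$, $\zeta\cdot v=\zeta^b v$, and $\widehat{\mathcal{O}}_{X,x}$ is the subalgebra spanned by those $u^i v^j$ with $ai+bj\equiv 0\pmod n$, i.e.\ the completed semigroup algebra of a two-dimensional cone, so $x\in X$ is toric.

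It remains to match the \emph{isolated} hypothesis with the conditions on the weights. If $d=\gcd(a,n)>1$, then the subgroup $\mu_d\subset\mu_n$ acts trivially on $u$ but non-trivially on $v$, so the fixed locus in $Y$ is at least one-dimensional and the quotient has a non-isolated singular locus; the same argument applies to $b$. Hence $\gcd(a,n)=\gcd(b,n)=1$, and after rescaling one may assume $a=1$ and $b=q$ with $\gcd(n,q)=1$, matching the toric normal form exactly. The constructions of the two directions are inverse to one another, giving the equivalence.

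The main obstacle I anticipate is justifying the existence of a $\mu_n$-equivariant formal regular system of parameters in positive and mixed characteristic, where $Y$ need not be defined over a field and classical averaging is unavailable. The argument must instead lean on the diagonalizability of $\mu_n$, which guarantees that every finite $\mu_n$-representation splits into character spaces; once this is set up over the appropriate Cohen ring, the inductive lifting through $\m_y^k/\m_y^{k+1}$ goes through and yields the equivariant coordinates needed above.
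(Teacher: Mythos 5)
The paper does not prove this proposition at all---it is imported verbatim from \cite[Proposition 2.7.4]{LMM25}---so there is no internal proof to compare against; your attempt has to stand on its own. Its core is the standard dictionary and is essentially sound: a two-dimensional cone in normal form gives the semigroup of monomials $x^iy^j$ with $i+qj\equiv 0 \pmod n$, i.e.\ the $\mu_n$-invariants for weights $(1,q)$, and conversely a $\mu_n$-action on $\widehat{\mathcal{O}}_{Y,y}$ is the same thing as a $\ZZ/n\ZZ$-grading, so homogeneous lifts of a homogeneous basis of $\m_y/\m_y^2$ give equivariant coordinates $u,v$ with diagonal action and the invariant ring is a completed saturated semigroup algebra, hence smooth or toric. (Your worry about Cohen rings is moot: the proposition is over an algebraically closed field, so everything is equicharacteristic; and since diagonalizable actions are gradings, no averaging is ever needed.)

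The genuine error is the third paragraph. The claim that $d=\gcd(a,n)>1$ forces a non-isolated singular locus on the quotient is false: $\mu_d\subset\mu_n$ then acts as a pseudo-reflection group scheme (fixing the divisor $\{v=0\}$ pointwise), and its quotient is again smooth, so the full quotient can perfectly well be smooth (weights $(0,1)$) or an isolated singularity (e.g.\ $\mu_4$ with weights $(2,1)$ on $\mathbb{A}^2$ yields the $A_1$-singularity). So you cannot deduce coprimality from isolatedness of the singular locus. The correct repair is one of the following: either invoke the definition of (cyclic) lrq singularities in \cite{LMM25}, which requires the $\mu_n$-action to be free outside the closed point---a subgroup $\mu_d$ fixing a divisor pointwise violates this directly, so $\gcd(a,n)=\gcd(b,n)=1$ is automatic from the definition, not from isolatedness; or, if one allows non-free presentations, first divide out the pseudo-reflection subgroups $\mu_{\gcd(a,n)}$ and $\mu_{\gcd(b,n)}$ (their quotients are smooth) and replace $\mu_n$ by the residual quotient acting with coprime weights. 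Note also that for the bare implication $(2)\Rightarrow(1)$ no coprimality is needed at all, since any diagonal $\mu_n$-quotient of $k\llbracket u,v\rrbracket$ is toric or smooth; coprimality only matters for pinning down the group scheme $\mu_n$ attached to the singularity, which is what the rest of the paper (e.g.\ Proposition \ref{prop:semicontinuity}) uses.
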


Indeed, many of the references you will find in this paper are phrased in terms of cyclic quotient singularities.\\

In \cite{Rie74}, where Riemenschneider conjectured that isolated cyclic quotient singularities deform to isolated cyclic quotient singularities, he also gave a description of the singularities in terms of equations and the relations between them.
\begin{theorem}[\cite{Rie74}]
Let $X$ be a toric surface singularity. Then there exist $e\in\NN$ and $a_{2},\dots,a_{e-1}\in\NN$ such that $X$ is formally isomorphic to
\begin{equation*}
\Spec k\llbracket  x_1,\dots,x_e\rrbracket /I,
\end{equation*}
where $I$ is the ideal generated by the $g_{i,j}$, defined in the following way
\begin{equation*}
g_{i,j}=\left\{
\begin{array}{ll} x_i x_j - x_{i+1}^{a_{i+1}}, &  i+1=j-1\\
x_i x_j - x_{i+1}^{a_{i+1}-1}(x_{i+2}^{a_{i+2}-2}\cdots x_{j-2}^{a_{j-2}-2})x_{j-1}^{a_{j-1}-1},& i+1<j-1,
\end{array}
\right.
\end{equation*}
for $1\leq i$ and $j\leq e$.

The module of relations between the generators is generated by

\begin{align*}
x_jg_{i,k}&=x_ig_{j,k}+\left(x_{j+1}^{a_{j+1}-2}\cdots x_{k-2}^{a_{k-2}-2}\right)x_{k-1}^{a_{k-1}-1}g_{i,j+1}&i<j<k-1\\
x_jg_{i,k}&=x_kg_{i,j}+x_{i+1}^{a_{i+1}-1}\left(x_{i+2}^{a_{i+2}-2}\cdots x_{j-1}^{a_{j-1}-2}\right)g_{j-1,k}&i+1<j<k.
\end{align*}
\end{theorem}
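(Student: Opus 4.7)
The plan is to translate everything into toric combinatorics and then read off the algebra. Start by normalizing the cone: since $X$ is a normal toric surface singularity, after applying an element of $GL_2(\ZZ)$ we may assume $X = \Spec k[\sigma^{\vee}\cap M]$ where $\sigma\subset N_{\RR}\cong\RR^2$ is the strictly convex cone spanned by $v_1 = e_2$ and $v_2 = n e_1 - q e_2$ for coprime integers $0 < q < n$. The whole statement is then really a statement about the Hilbert basis and defining relations of the dual semigroup $\sigma^{\vee}\cap M$.

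\medskip

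First I would compute the Hilbert basis. Let $n/q = a_2 - \tfrac{1}{a_3 - \tfrac{1}{a_4 - \cdots}}$ be the Hirzebruch--Jung continued fraction expansion, with all $a_i\geq 2$, of length $e-2$. Using the standard recursion one produces lattice points $u_1,\dots,u_e\in\sigma^{\vee}\cap M$ satisfying the key three-term recurrence
\begin{equation*}
u_{i-1} + u_{i+1} = a_i\, u_i \qquad (2\leq i\leq e-1),
\end{equation*}
and one checks (e.g.\ as in Oda or Fulton's toric books) that $u_1,\dots,u_e$ is precisely the Hilbert basis of $\sigma^{\vee}\cap M$. Setting $x_i := \chi^{u_i}$ gives a surjection $k[x_1,\dots,x_e]\twoheadrightarrow k[\sigma^{\vee}\cap M]$.

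\medskip

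Next I would extract the defining relations. The recurrence $u_{i-1}+u_{i+1} = a_i u_i$ translates verbatim to the "tridiagonal" equations
\begin{equation*}
x_{i-1}x_{i+1} = x_i^{a_i},
\end{equation*}
which are exactly the $g_{i-1,i+1}$. For $i+1<j-1$ I would iterate the recurrence inductively on $j-i$: writing $u_i = a_{i+1}u_{i+1} - u_{i+2}$ and substituting repeatedly produces the identity
\begin{equation*}
u_i + u_j = (a_{i+1}-1)u_{i+1} + \sum_{k=i+2}^{j-2}(a_k-2)u_k + (a_{j-1}-1)u_{j-1},
\end{equation*}
which is exactly the content of the general $g_{i,j}$. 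To show these are \emph{all} the relations I would use the standard fact that the kernel of the semigroup presentation $\ZZ_{\geq 0}^e\twoheadrightarrow \sigma^{\vee}\cap M$ is generated by the "primitive" binomials; a direct Hilbert-function / Gröbner-basis argument (ordering monomials by total degree, with $x_i \succ x_j$ if $|i-j|$ is large) shows the $g_{i,j}$ form a Gröbner basis whose leading monomials are the $x_ix_j$ with $|i-j|\geq 2$, confirming that the generic initial ideal is squarefree of the expected codimension.

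\medskip

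Finally, for the module of relations I would verify the two claimed syzygies by direct substitution of the $g_{i,j}$ (the leading terms cancel because $x_jx_ix_k = x_ix_jx_k$, and the tail monomials match by another application of the Hilbert-basis identity). That they \emph{generate} the syzygy module then follows from Schreyer's theorem applied to the Gröbner basis $\{g_{i,j}\}$: the $S$-pairs between $g_{i,k}$ and $g_{j,k}$ reduce exactly along the first displayed syzygy, and the $S$-pairs between $g_{i,j}$ and $g_{i,k}$ reduce along the second; since all $S$-pairs reduce to zero via these, Schreyer produces precisely the stated generators. The main obstacle in this plan is the bookkeeping in the telescoping step, where one must verify that the exponent pattern $(a_{i+1}-1, a_{i+2}-2, \dots, a_{j-2}-2, a_{j-1}-1)$ is stable under the inductive step; this is where the combinatorics of Hirzebruch--Jung fractions really does the work, and it is the only place in the argument that is genuinely delicate rather than formal.
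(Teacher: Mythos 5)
The paper does not prove this statement at all: it is quoted verbatim from Riemenschneider's paper \cite{Rie74}, so there is no internal proof to compare against. Judged on its own, your toric route is essentially the standard modern form of Riemenschneider's original invariant-theoretic computation (generators of the $\mu_n$-invariant ring and their relations), recast as the Hilbert basis of the dual semigroup, and the outline is sound: the three-term recurrence gives the $g_{i-1,i+1}$, the telescoping identity you wrote is correct and proves the general $g_{i,j}$ by induction on $j-i$, and a Gr\"obner/normal-form argument plus Schreyer does yield generation of the ideal and of the syzygy module. An added benefit of your phrasing is that it is visibly characteristic-free, which is exactly what this paper needs.

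Two points deserve care before this counts as complete. First, a convention slip: with the cone $\sigma=\mathrm{Cone}(e_2,\,ne_1-qe_2)$, i.e.\ the $\tfrac1n(1,q)$ singularity, the exponents $a_2,\dots,a_{e-1}$ in the equations come from the Hirzebruch--Jung expansion of $n/(n-q)$, not of $n/q$ (the latter governs the self-intersection numbers on the minimal resolution; this is Riemenschneider's duality of the two fractions). This is repairable by renaming $q$, but as you yourself note, this is exactly the delicate bookkeeping, so it should be stated correctly. Second, the step ``these are all the relations'' is asserted rather than argued: the clean way is to show that every element of $\sigma^{\vee}\cap M$ has a unique representation as a nonnegative combination of two consecutive $u_i,u_{i+1}$ (modulo the boundary normalization), so that the monomials supported on consecutive variables form a $k$-basis of $k[\sigma^{\vee}\cap M]$; this is precisely the statement that the $x_ix_j$ with $j-i\geq 2$ generate the initial ideal, and without it the Gr\"obner claim and the subsequent Schreyer computation for the syzygies are not yet proofs but programs. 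Neither issue is a wrong idea, but both need to be carried out for the argument to stand on its own.
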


Note that $I$ is prime.

In order to simplify the notation, we will denote a singularity given by $a_2,\dots,a_{e-1}=\overline{a}$ by $X(\overline{a})$.

Note that one can make sense of the relations $g_{i,j}$ even if some of the $a_i$ are 0 or 1. If $a_i=0$ for some $i$, then all $x_i$ are invertible and $X(\overline{a})$ is regular, and $X(a_1,\dots,a_i,1,a_{i+1},\dots,a_e)\cong X(a_1,\dots,a_i-1,a_{i+1}-1,\dots,a_e)$.

We will concentrate primarily on the elements $g_{i,j}$ where $j = i+2$. A justification is given by the following theorem.

\begin{theorem}\label{thm:toric_eq}
Let $I\subseteq k\llbracket  x_1,\dots,x_e\rrbracket $ be a prime ideal of height $e-2$. If there exist $a_2,\dots,a_{e-1}\in\NN$, such that $x_{i-1}x_{i+1}-x_i^{a_i}\in I$ for all $i=2,\dots,e-1$, then either
\begin{itemize}
\item $I=(x_2,\dots,x_{e-1})$ or\\
\item $I=(\{g_{i,j}\,|\, 2\leq i+1\leq j-1\leq e-1\})$.\\[-3mm]
\end{itemize}
This implies that $\Spec k\llbracket  x_1,\dots,x_e\rrbracket /I$ is either smooth or a 2-dimensional toric surface singularity.
\end{theorem}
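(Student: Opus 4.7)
The plan is to recognize $I$ as a minimal prime over the subideal $J := (h_2, \ldots, h_{e-1})$, where $h_i := x_{i-1}x_{i+1} - x_i^{a_i}$, and then to show that any such minimal prime of height $e-2$ is either $(x_2, \ldots, x_{e-1})$ or the full toric ideal $I_{\mathrm{tor}} := (\{g_{i,j}\})$. Since $J$ is generated by $e-2$ elements and sits inside $I$ of height $e-2$, Krull's height theorem forces $\operatorname{ht}(J) = e-2$, so $I$ is a minimal prime over $J$. I would then split on whether $I$ contains some $x_j$ with $2 \leq j \leq e-1$.

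\emph{Case A: $x_j \in I$ for some $2 \leq j \leq e-1$.} The relation $h_{j+1} = x_j x_{j+2} - x_{j+1}^{a_{j+1}} \in I$ gives $x_{j+1}^{a_{j+1}} \in I$, hence $x_{j+1} \in I$ by primality (when $j \leq e-2$); symmetrically $h_{j-1}$ gives $x_{j-1} \in I$ when $j \geq 3$. Iterating in both directions yields $(x_2, \ldots, x_{e-1}) \subseteq I$, and this ideal is itself prime of height $e-2$ (its quotient is $k\llbracket x_1, x_e\rrbracket$). Hence $I = (x_2, \ldots, x_{e-1})$ and the quotient is regular.

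\emph{Case B: no such $x_j$ lies in $I$.} I would show $g_{i,j} \in I$ for all admissible $(i,j)$ by induction on $j-i \geq 2$. The base case $j = i+2$ is immediate since $g_{i,i+2} = h_{i+1} \in J \subseteq I$. For the inductive step, I use the identity
\[
x_{j-1}\, g_{i,j} \;=\; x_j\, g_{i,j-1} \;+\; x_{i+1}^{a_{i+1}-1}\bigl(x_{i+2}^{a_{i+2}-2}\cdots x_{j-2}^{a_{j-2}-2}\bigr)\, h_{j-1},
\]
which is the second Riemenschneider syzygy recalled above (substitute $(i, j-1, j)$ for the original triple $(i, j, k)$, so that its rightmost factor $g_{j-2,j}$ collapses to $h_{j-1}$); it can also be checked directly by replacing $x_{j-1}^{a_{j-1}}$ by $x_{j-2}x_j - h_{j-1}$ in the definition of $g_{i,j}$. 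The right-hand side lies in $I$ by the inductive hypothesis on $g_{i,j-1}$ and the containment $J \subseteq I$, and since $x_{j-1} \notin I$ with $I$ prime, $g_{i,j} \in I$. Thus $I \supseteq I_{\mathrm{tor}}$; as $I_{\mathrm{tor}}$ is itself prime of height $e-2$ by Riemenschneider's theorem, the heights agree and $I = I_{\mathrm{tor}}$, so the quotient is the toric surface $X(\overline{a})$.

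The only genuine technical step is verifying the displayed syzygy; it is a short manipulation but is where the specific shape of the $g_{i,j}$ really enters. Everything else reduces to a clean case split together with a comparison of heights among primes; in particular the two candidate ideals are already known to be prime of the correct height.
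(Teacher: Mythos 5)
Your proof is correct and follows essentially the same route as the paper: the same dichotomy on whether some variable lies in $I$, with the first case settled by primality plus the height comparison with $(x_2,\dots,x_{e-1})$, and the second case settled by using a Riemenschneider syzygy and primality to cancel a variable not in $I$ and conclude $g_{i,j}\in I$. Your write-up merely makes explicit (induction on $j-i$, the single specialized syzygy, and the final comparison of $I$ with the prime toric ideal of height $e-2$) what the paper compresses into the remark that some monomial $M\notin I$ satisfies $Mg_{i,j}\in I$.
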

\begin{proof}
If $x_i\in I$ for some $i=1,\dots,e$, then by primality of $I$ we have $x_j\in I$ for all $j=2,\dots,e-1$. As $I$ has height $e-2$, we must have $I=(x_2,\dots,x_{e-1})$.

We now assume $x_i\notin I$ for all $i=1,\dots,e$. The relations between the $g_{i,j}$ tell us that for any $i$ and $j$ there exists some monomial $M$ such that $Mg_{i,j}\in I$. As $M\notin I$, we get $g_{i,j}\in I$.
\end{proof}


\section{Deformations}\label{sec:deformations}
In this section, we introduce the notion of a singularity-deformation and establish a few basic facts about them. We end this section by showing that questions about singularities of proper deformations can be reduced to questions about singularity-deformations.

Let $k$ be an algebraically closed field and let $X/k$ be a surface with at worst toric singularities. We are interested in the singularities arising in proper deformations of $X$ over a complete DVR $S$ with residue field $k$.

\begin{definition}
A \emph{proper deformation} of $X$ over $S$ is a cartesian diagram
\[\begin{tikzcd}[ampersand replacement=\&]
	X \& \mathcal{X} \\
	{\Spec k} \& {\Spec S,}
	\arrow[from=1-1, to=1-2]
	\arrow[from=1-1, to=2-1]
	\arrow["\pi"', from=1-2, to=2-2]
	\arrow[, from=2-1, to=2-2]
\end{tikzcd}\]
where $\pi$ is proper, flat and surjective.
\end{definition}

We will show that the geometric generic fiber $\mathcal{X}_{\overline{\eta}}$ contains at worst toric singularities. This will be done using the equations defining toric surface singularities. In order to work with the equations, we introduce a new type of deformation.

\begin{definition}
Let $X=k\llbracket  x_1,\dots,x_e\rrbracket /I$ and let $S$ be a complete DVR. A \emph{singularity-deformation} of $X$ over $S$ is given by a cartesian diagram
\[\begin{tikzcd}[ampersand replacement=\&]
	\Spec k\llbracket  x_1,\dots,x_e\rrbracket /I \& {\Spec S\llbracket  y_1,\dots,y_{e'}\rrbracket /I'} \\
	{\Spec k} \& {\Spec S,}
	\arrow[from=1-1, to=1-2]
	\arrow[from=1-1, to=2-1]
	\arrow["\pi"', from=1-2, to=2-2]
	\arrow["s", from=2-1, to=2-2]
\end{tikzcd}\]
where $\pi$ is flat and surjective.
\end{definition}

Before making a connection between the two types of deformations, we present a few lemmas about singularity-deformations.

\begin{lemma}\label{lem:emb_dim_doesnt_increase_sing}
In the above definition we may always assume $e'=e$.
\end{lemma}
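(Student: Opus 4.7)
The plan is to produce, from any singularity-deformation $\Spec S\llbracket y_1,\dots,y_{e'}\rrbracket/I'$ of $X=\Spec k\llbracket x_1,\dots,x_e\rrbracket/I$, an isomorphic presentation of the total space using exactly $e$ variables over $S$. The strategy is to lift a generating set of the maximal ideal of $X$ to the total space, and then promote this to a surjection from an $e$-variable power series ring via a Nakayama-type completeness argument; no assumption on whether $e'\leq e$ or $e'\geq e$ is required, so the case where variables need to be ``added'' and the case where they need to be ``eliminated'' are handled uniformly.

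Write $R:=S\llbracket y_1,\dots,y_{e'}\rrbracket/I'$ and let $\pi\in S$ be a uniformizer. The cartesian square identifies the special fiber $R/\pi R$ with $X$, so the images of $x_1,\dots,x_e$ generate the maximal ideal of $R/\pi R$. Choose lifts $\tilde x_i\in\mathfrak{m}_R$ of these elements and define a continuous $S$-algebra homomorphism
\[
\phi\colon S\llbracket z_1,\dots,z_e\rrbracket\longrightarrow R,\qquad z_i\mapsto\tilde x_i.
\]
By construction, the reduction $\phi\bmod\pi$ is the canonical surjection $k\llbracket z_1,\dots,z_e\rrbracket\to X$ sending $z_i\mapsto x_i$, whose kernel is exactly $I$.

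The heart of the argument is to upgrade this to surjectivity of $\phi$ itself. Given any $r\in R$, the surjectivity of $\phi\bmod\pi$ allows one to inductively find $p_n\in S\llbracket z_1,\dots,z_e\rrbracket$ and $r=r_0,r_1,r_2,\dots\in R$ satisfying $r_n-\phi(p_n)=\pi r_{n+1}$, so that
\[
r\equiv\phi\bigl(p_0+\pi p_1+\dots+\pi^n p_n\bigr)\pmod{\pi^{n+1}R}
\]
for every $n$. Since $R$ is a Noetherian complete local ring with $\pi$ in its maximal ideal, $\pi$-adic Cauchy sequences converge in $R$, and the source $S\llbracket z_1,\dots,z_e\rrbracket$ is $(\pi,z)$-adically complete, so $\sum_n\pi^n p_n$ is a well-defined element mapped to $r$ by $\phi$. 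Hence $\phi$ is surjective.

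Setting $J:=\ker\phi$ we obtain $R\cong S\llbracket z_1,\dots,z_e\rrbracket/J$, and reduction modulo $\pi$ yields $J\bmod\pi=I$, so the new presentation realizes the deformation as a singularity-deformation of $X$ with exactly $e$ variables. Flatness over $S$ is preserved under isomorphism of $S$-algebras, so the cartesian square is still satisfied. The step I expect to require the most care is the completeness-based surjectivity argument for $\phi$; the remaining steps are essentially formal.
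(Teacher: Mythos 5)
Your proposal is correct and follows essentially the same route as the paper: lift the images of $x_1,\dots,x_e$ from the special fiber to the total ring, map $S\llbracket z_1,\dots,z_e\rrbracket$ there, and prove surjectivity by successive $t$-adic approximation using surjectivity modulo the uniformizer together with completeness. Your write-up just makes the convergence and kernel-identification steps slightly more explicit than the paper does.
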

\begin{proof}
The diagram above corresponds to a push-out diagram of rings
\[\begin{tikzcd}[ampersand replacement=\&]
	 k\llbracket  x_1,\dots,x_e\rrbracket /I \& { S\llbracket  y_1,\dots,y_{e'}\rrbracket /I'} \\
	{k} \& {S,}
	\arrow["\varphi", from=1-2, to=1-1]
	\arrow[from=2-1, to=1-1]
	\arrow[from=2-2, to=1-2]
	\arrow[from=2-2, to=2-1]
\end{tikzcd}\]
Choose $x_i\in\varphi^{-1}(x_i)\in S\llbracket y_1,\dots,y_{e'}\rrbracket$. If we can show that $S\llbracket x_1,\dots,x_e\rrbracket$ surjects onto $S\llbracket x_1,\dots,x_{e'}\rrbracket$, we are done. Let $z\in S\llbracket y_1,\dots,y_{e'}\rrbracket /I'$ be arbitrary and choose $r_\alpha\in k$ such that
\begin{align*}
\varphi(z)=\sum_{\alpha\in \NN^{e}}r_\alpha x^{\alpha}.
\end{align*}
Let $s_\alpha$ be preimages of the $r_\alpha$ in $S$. Then
\begin{align*}
z\in \sum_{\alpha\in \NN^{e}}s_\alpha x^{\alpha} + tz'
\end{align*}
for some $z'\in S\llbracket y_1,\dots,y_{e'}\rrbracket /I'$. We have found an element in $S\llbracket x_1,\dots,x_e\rrbracket$ whose image under the obvious inclusion morphism agrees with $y_{e'}$ in constant $t$-degree. Repeating the argument for $z'$ shows that we can find elements in $S\llbracket x_1,\dots,x_e\rrbracket$ that agree with $z$ up to any $t$-degree. It is easy to see that these elements form a converging series in $\in S\llbracket x_1,\dots,x_e\rrbracket$. 
\end{proof}

\begin{lemma}
Let $S\llbracket  x_1,\dots,x_n\rrbracket /I'$ be a singularity-deformation of $k\llbracket  x_1,\dots,x_n\rrbracket /I$ over a complete DVR $S$ and let $I$ be a finitely generated prime ideal. Then $I'$ is a prime ideal.
\end{lemma}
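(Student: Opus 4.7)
The plan is to show that $R := S[\![x_1,\dots,x_n]\!]/I'$ is an integral domain, which is equivalent to $I'$ being prime. Two structural facts are available: (i) flatness of $\pi$ over the DVR $S$ is equivalent to $R$ being $t$-torsion-free, where $t$ is the uniformizer of $S$; (ii) the cartesian-diagram hypothesis gives $R/tR \cong k[\![x_1,\dots,x_n]\!]/I$, which is an integral domain by the assumption that $I$ is prime. Moreover $R$ is a Noetherian local ring, being a quotient of the Noetherian local ring $S[\![x_1,\dots,x_n]\!]$, and $t$ lies in its maximal ideal.

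The argument I would run is the standard ``lifting primality along a flat deformation over a DVR.'' Suppose $fg=0$ in $R$. Reducing modulo $t$ and using that $R/tR$ is a domain, one of the two factors, say $f$, lies in $tR$. Write $f = t f_1$; by $t$-torsion-freeness from flatness, $tf_1 g = 0$ forces $f_1 g = 0$. Iterating this dichotomy produces either a point at which $g = 0$, or a sequence $f_n \in R$ with $f = t^n f_n$ for every $n \geq 0$; in the latter case $f \in \bigcap_{n\geq 0} t^n R$. By Krull's intersection theorem applied to the Noetherian local ring $R$ with $t$ in its maximal ideal, this intersection vanishes, so $f = 0$.

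There is no real obstacle here: the proof is essentially a two-line application of torsion-freeness plus Krull's intersection theorem. The only subtle point to get right in the write-up is the justification of each ingredient: that flatness of $R$ over a DVR is exactly $t$-torsion-freeness, and that the hypotheses of Krull's intersection theorem apply to $R$ (this needs $R$ Noetherian local with $t$ in the maximal ideal, both of which follow from $S[\![x_1,\dots,x_n]\!]$ being Noetherian local with $t$ in its maximal ideal together with $I' \subseteq (t, x_1, \dots, x_n)$, the latter being automatic since the quotient $R$ is nonzero). The finite-generation hypothesis on $I$ is automatically satisfied in the Noetherian setting and plays no role beyond reminding the reader that we are in the Noetherian regime.
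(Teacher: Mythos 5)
Your argument is correct and runs essentially parallel to the paper's: reduce a putative zero-divisor relation modulo $t$, use that the special fiber is a domain to see one factor is a multiple of $t$, cancel $t$ via flatness (equivalently, $t$-torsion-freeness), and iterate. The only genuine difference is how the iteration is closed off: the paper stays at the level of the ideals $I\subseteq k\llbracket x_1,\dots,x_n\rrbracket$ and $I'\subseteq S\llbracket x_1,\dots,x_n\rrbracket$, builds a sequence of elements of $I'$ converging $t$-adically to a lift of $a$, and invokes closedness of finitely generated ideals in the complete ring $S\llbracket x_1,\dots,x_n\rrbracket$ (which is why the finite-generation hypothesis is stated); you instead pass to the quotient $R$ and invoke Krull's intersection theorem on the Noetherian local ring $R$ with $t$ in its maximal ideal. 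These are two faces of the same Artin--Rees phenomenon, and your observation that the finite-generation assumption is automatic (since $S\llbracket x_1,\dots,x_n\rrbracket$ is Noetherian, hence so is its quotient $R$) is correct. One small imprecision worth fixing: the dichotomy can alternate between $f$ and $g$ at different steps, so the cleanest phrasing is to extract the exact powers $t^a \mid f$, $t^b\mid g$ (which exist once $f,g\ne 0$ by $\bigcap_n t^nR=0$), write $f=t^af'$, $g=t^bg'$ with $f',g'\notin tR$, and obtain $f'g'=0$ from torsion-freeness, contradicting that $R/tR$ is a domain; the two-branch description ``either $g=0$ is found or $f$ is infinitely $t$-divisible'' is not literally how the iteration resolves.
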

\begin{proof}
Fix a uniformizer $t$ of $S$ and let $ab\in I'$. Then for their images under the tensor product map $\overline{a},\overline{b}\in k\llbracket  x_1,\dots,x_n\rrbracket \cong k\otimes_{S}S\llbracket  x_1,\dots,x_n\rrbracket $ we have $\overline{a}\overline{b}\in I$ and therefore w.l.o.g. $\overline{a}\in I$.

As elements in $I$ lift to elements of $I'$ there exists $p\in S\llbracket  x_1,\dots,x_m\rrbracket $ such that
\begin{equation*}
a+tp\in I'.
\end{equation*}
This means that we have found an element in $I'$ that agrees with $a$ mod $(t)$.

Furthermore, we get
\begin{equation*}
tpb=(a+tp-a)b=(a+tp)b-ab\in I'.
\end{equation*}
As $S\llbracket  x_1,\dots,x_n\rrbracket /I'$ is flat over $S$, we have the implication
\begin{equation*}
tpb\in I'\quad\Rightarrow \quad pb\in I'.
\end{equation*}

We can now repeat this process either finding an element in $I'$ that agrees with $a$ up to $(t^2)$, or an element that agrees with $b$ up to $(t)$. Repeating this process indefinitely we either find a sequence of elements $a_n\in I'$ with $a_n\equiv a$ mod $(t^n)$ or such a sequence for $b$. W.l.o.g. let $a_n\in I$ be a sequence with $\lim_{n\in \NN}a_n=a$.

As $I$ is finitely generated, it's generators lift to generators of $I'$ and $I'$ is also finitely generated. As $S\llbracket  x_1,\dots,x_n\rrbracket $ is complete, any finitely generated ideal is closed. Since $I'$ is closed, the limit of any sequence of elements in $I'$ lies in $I'$ and therefore $a\in I'$.

We have shown
\begin{equation*}
ab\in I'\quad\Rightarrow a\in I'\,\lor\, a\in I'
\end{equation*}
meaning that $I'$ is prime.
\end{proof}

From now on, let $S$ denote a complete DVR with uniformizer $t$, and let $R=S\llbracket  x_1,\dots,x_e\rrbracket /I'$ denote a singularity-deformation of a toric surface singularity $X=k\llbracket  x_1,\dots,x_e\rrbracket /I$.

\begin{lemma}
All monomials in $R$ can uniquely be written as $x_ix_{i+1}+th$ where $h$ contains no terms divisible by $x_ix_j$ for $i-j\geq 2$.
\end{lemma}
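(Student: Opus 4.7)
The plan is to prove existence and uniqueness separately. Call a monomial of the form $x_i^a x_{i+1}^b$ (with the degenerate cases $x_i^a$ and $1$ included) a \emph{standard monomial}. The key background fact is that in the toric singularity $k\llbracket x_1,\dots,x_e\rrbracket/I$ the standard monomials form a $k$-basis of the coordinate ring: repeatedly applying the relations $g_{i,j}=0$ for $j-i\geq 2$ reduces any monomial modulo $I$ to a unique standard one. This is what I would want to lift to $R$, with the deformation contributing only a correction that is divisible by $t$.

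For existence, I would first lift each generator $g_{i,j}$ of $I$ to a generator of the form $g_{i,j}+t p_{i,j}$ of $I'$, which is possible since elements of $I$ lift to elements of $I'$. In $R$ this provides a rewrite rule replacing $x_i x_j$ (for $j-i\geq 2$) by the corresponding monomial in the consecutive variables $x_{i+1},\dots,x_{j-1}$ minus a $t$-correction. Starting from a monomial $x^\alpha$, I would apply these rewrites to eliminate all factors $x_i x_j$ with $j-i\geq 2$. Within a fixed $t$-adic degree only finitely many rewrites are needed, by the toric fact above, while every fresh $t$-correction is pushed strictly higher in the $t$-adic filtration. Iterating across all $t$-degrees produces a Cauchy sequence whose limit exists because $S$, and hence $R$, is $t$-adically complete; recursively applying the same procedure to each correction produces an $h$ in reduced form.

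For uniqueness, suppose $x^\alpha = M_1 + t h_1 = M_2 + t h_2$ with $M_1, M_2$ standard monomials and $h_1, h_2$ in reduced form. Reducing modulo $t$ inside $R/tR \cong k\llbracket x_1,\dots,x_e\rrbracket/I$ yields $M_1 \equiv M_2 \pmod I$, and since standard monomials are linearly independent in $k[X]$ this forces $M_1 = M_2$ as monomials. Then $t(h_1-h_2)=0$ in $R$, and the flatness of $R$ over $S$ (built into the definition of a singularity-deformation) implies that $t$ is a non-zero-divisor on $R$, so $h_1 = h_2$.

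The main obstacle is making the existence argument rigorous: one has to simultaneously track a combinatorial measure of how much exponent sits on non-consecutive index pairs, which strictly decreases under each rewrite at a fixed $t$-level, together with the $t$-adic order of the accumulated corrections, which grows without bound. Passing to the limit then relies on $t$-adic completeness. The uniqueness half, by contrast, is essentially formal once the basis property of standard monomials in $k[X]$ and the flatness of $R$ over $S$ are in hand.
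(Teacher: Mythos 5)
Your argument is essentially the paper's proof: lift the generators $g_{i,j}$ of $I$ to $g_{i,j}+th_{i,j}\in I'$, use them as rewrite rules eliminating $x_ix_j$ with $j-i\geq 2$ degree by degree in $t$, and invoke $t$-adic completeness of $R$ to pass to the limit. Your uniqueness step (reduce mod $t$, use that the monomials in consecutive variables are linearly independent in $k\llbracket x_1,\dots,x_e\rrbracket/I$, then use flatness so that $t$ is a non-zero-divisor, iterated through the $t$-adic filtration) is sound and in fact supplies a detail the paper's proof leaves implicit.
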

\begin{proof}
Elements of $I$ lift to elements of $I'$, meaning that there exist $g_{i,j}'\in I'$ of the form
\begin{align*}
g_{i,j}'=g_{i,j}+th_{i,j}.
\end{align*}
We can therefore replace any occurrence of $x_ix_j$ with $i<j-1$ by a monomial in $x_{i+1},\dots,x_{j-1}$ plus some term of higher degree in $t$. Doing this first for all monomials of higher and higher $t$-degrees, the lemma follows from the completeness of $R$.
\end{proof}

\begin{definition}
We call replacing all monomials in $h_{i,j}$ by monomials in the ''compacted'' form \emph{compacting}.
\end{definition}

Note that compacting gives us a way of canonically representing any element in $R$ and that two elements of $R$ are equal if and only if their compacted forms are identical.\\

We will often look at elements of $R$ in order of their degrees in $t$. It will therefore prove useful to have a notion of divisibility that doesn't depend on higher degrees of $t$.

\begin{definition}
We say that a monomial $M$ is \emph{quasi-divisible} by $x_l$, if there exists some monomial $M'$ and formal power series $p$ such that $M=x_i\cdot M'+t^{\deg_t(M)+1}p$ in $R$. In other words, $M$ is quasi-divisible by $x_l$, if we can write it as a multiple of $x_l$ plus some terms of higher degree in $t$. If a monomial is not quasi-divisible by $x_l$, we say that it is \emph{quasi-constant} in $x_l$.
\end{definition}

Let $p$ be a formal power series. We can successively replace all monomials of $p$ that are quasi-divisible by $x_l$ by their counterpart that is a multiple of $x_l$ plus some terms of higher degree in $t$. We start in $t$-degree 0 and work our way up. This gives as a way to write $p$ as an element of $R$ such that all monomials in $p$ that are quasi-divisible by $x_l$ are truly multiples of $x_l$. This will be useful in the upcoming section, so we give this process a name.

\begin{definition}
Writing $p$ in this form will be called \emph{replacing $p$ by a maximally $x_l$-divisible representative}.
\end{definition}

\subsection{Relation between proper and singularity-deformations}
The following theorem tells us that if we're interested in singularities of a proper deformation of $X$, it is enough to look at singularity-deformations of the completions of local rings of points of $X$.

\begin{theorem}\label{thm:proper_to_sing_def}
Let $X$ have isolated singularities, let $\mathcal{X}\to\Spec S$ be a proper deformation of $X$, and assume that the geometric generic fiber $\mathcal{X}_{\overline{\eta}}$ has isolated singularities. Then all singularities of the geometric generic fiber $\mathcal{X}_{\overline{\eta}}$ appear as singularities of the geometric generic fiber of some singularity-deformation of points in $X$.
\end{theorem}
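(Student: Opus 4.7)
My plan is, given a singular closed point $y \in \mathcal{X}_{\overline\eta}$, to exhibit a singular closed point $x \in X$ to which $y$ specializes and to present $\widehat{\mathcal{O}}_{\mathcal{X},x}$ as a singularity-deformation of $\widehat{\mathcal{O}}_{X,x}$ whose geometric generic fiber has a singularity formally isomorphic to that at $y$. For the specialization step, I view $y$ as a point of $\mathcal{X}$ via the projection $\mathcal{X}_{\overline\eta} \to \mathcal{X}$; properness of $\pi$ forces the closure of its image to meet the special fiber $X$, and I pick any closed point $x$ in this intersection. Such an $x$ is necessarily singular in $X$: otherwise, flatness of $\pi$ combined with smoothness of the fiber at $x$ would make $\pi$ smooth at $x$, and openness of the smooth locus would then propagate smoothness to every generic fiber point specializing to $x$, contradicting singularity of $y$.

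For the second step, I build the singularity-deformation out of the complete local ring $\widehat{\mathcal{O}}_{\mathcal{X},x}$: it is Noetherian, flat over $S$, and reduces modulo a uniformizer to $\widehat{\mathcal{O}}_{X,x}$. Cohen's structure theorem, together with Lemma~\ref{lem:emb_dim_doesnt_increase_sing}, allows me to present it as $S\llbracket x_1,\dots,x_e\rrbracket/I'$ where $e$ is the embedding dimension of $X$ at $x$, so that $\Spec \widehat{\mathcal{O}}_{\mathcal{X},x} \to \Spec S$ is a singularity-deformation of $\widehat{\mathcal{O}}_{X,x}$.

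Next I identify $y$ with a closed point $y'$ on the geometric generic fiber of this singularity-deformation. Writing $R = \mathcal{O}_{\mathcal{X},x}$ and $\widehat R = \widehat{\mathcal{O}}_{\mathcal{X},x}$, the point $y$ picks out a prime $\mathfrak q \subset R \otimes_S \overline K$; faithful flatness of $R \to \widehat R$ is preserved by $\otimes_S \overline K$, so there is a prime $\hat{\mathfrak q} \subset \widehat R \otimes_S \overline K$ lying over $\mathfrak q$, which I may take minimal among such. The corresponding closed point $y'$ is the candidate on the geometric generic fiber of the singularity-deformation.

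The hardest part will be showing that the singularities at $y$ and $y'$ coincide as formal germs. For this I would invoke excellence of $R$ --- valid because $\mathcal{X}$ is of finite type over the complete DVR $S$ --- so that $R \to \widehat R$ is a regular morphism. Regularity is preserved under the base change $\otimes_S \overline K$, and by minimality of $\hat{\mathfrak q}$ the fiber of $(R \otimes_S \overline K)_{\mathfrak q} \to (\widehat R \otimes_S \overline K)_{\hat{\mathfrak q}}$ is $0$-dimensional and geometrically regular, hence a field. Since both $y$ and $y'$ are $\overline K$-rational, the induced local map on completed local rings is flat with surjection onto the residue field, hence surjective by Nakayama, and therefore an isomorphism, establishing that $y$ does appear as a singularity on the geometric generic fiber of the singularity-deformation.
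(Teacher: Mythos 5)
Your strategy mirrors the paper's: given a singular closed point $y$ of $\mathcal{X}_{\overline\eta}$, specialize it to a closed point $x$ of the special fiber $X$, pass to the complete local ring $\widehat{\mathcal{O}}_{\mathcal{X},x}$, observe that $\widehat{\mathcal{O}}_{\mathcal{X},x}\to S$ is a singularity-deformation of $\widehat{\mathcal{O}}_{X,x}$, and then recognize $\widehat{\mathcal{O}}_{\mathcal{X}_{\overline\eta},y}$ as the formal germ at a point of the geometric generic fiber of this singularity-deformation. The paper sets this up concretely by choosing an affine chart $\Spec S[x_1,\dots,x_e]/I$ and working with the explicit maximal ideals $\p=(x_i-\lambda_i)$ and $\m$, whereas you work intrinsically with the local rings; these are the same ring and the difference is cosmetic. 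Your extra remark that $x$ must in fact be a singular point of $X$ is correct (via openness of the smooth locus) but not strictly demanded by the statement, which only asks for a singularity-deformation ``of points in $X$.''

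Where you genuinely add something is in the last, hardest step: the paper essentially asserts the isomorphism $\widehat{A_{\varphi^{-1}(\p)}}\cong K\llbracket x_1-\lambda_1,\dots,x_e-\lambda_e\rrbracket/I$ without argument, whereas you justify the comparison of formal germs by invoking excellence of $\mathcal{O}_{\mathcal{X},x}$, so that $R\to\widehat R$ is a regular morphism, and then base-changing and using a flat-local-with-trivial-fiber argument. That is the correct conceptual reason the two completions agree, and it makes the step explicit. Two technical points deserve a bit more care in your write-up, though neither is fatal: (i) after base change along $S\to\overline K$ the ring $\widehat R\otimes_S\overline K$ need not obviously be Noetherian when $\overline K/\Quot(S)$ is an infinite (possibly inseparable) extension, so the regular-fiber argument should either be run before extending to $\overline K$ (over $\Quot(S)$, then descend) or be accompanied by a Noetherianity check; and (ii) you should verify that the minimal prime $\hat{\mathfrak q}$ you choose is actually a $\overline K$-rational closed point, e.g.\ by noting that the formal fiber at the height-one prime $\mathfrak q_0\subset R$ is zero-dimensional (as in the paper's explicit coordinate computation), so that $\hat{\mathfrak q}$ has residue field $\overline K$. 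With these clarifications the argument is sound and, if anything, more rigorous at the crucial step than the paper's own proof.
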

\begin{proof}
Let $y\in \mathcal{X}_{\overline{\eta}}$ be a closed point. We are interested in the singularity $\widehat{\mathcal{O}_{\mathcal{X},y}}$. Choose an affine open neighborhood $U=\Spec R$ of the image of $y$ in $\mathcal{X}$. Then $R$ is a finitely generated flat $S$-algebra, say $S[x_1,\dots,x_e]/I$. Let $K=\overline{\Quot(S)}$. We have
\[\begin{tikzcd}[ampersand replacement=\&]
	 \Spec k\times_{\Spec S} U \& U \& \Spec K\times_{\Spec S} U \ni y\\
	 X \& {\mathcal{X}} \& \mathcal{X}_{\overline{\eta}} \\
	 {\Spec k} \& {\Spec S} \& \Spec K,
	\arrow[from=1-1, to=1-2]
	\arrow[from=1-1, to=2-1]
	\arrow[from=1-2, to=2-2]
	\arrow[from=1-3, to=1-2]
	\arrow[from=1-3, to=2-3]
	\arrow[from=2-1, to=2-2]
	\arrow[from=2-1, to=3-1]
	\arrow[from=2-2, to=3-2]
	\arrow[from=2-3, to=2-2]
	\arrow[from=2-3, to=3-3]
	\arrow[from=3-1, to=3-2]
	\arrow[from=3-3, to=3-2]
\end{tikzcd}\]
with $U\to \Spec S$ flat and of finite type.

As the first and third line are all affine schemes, we get a diagram of rings
\[\begin{tikzcd}[ampersand replacement=\&]
	 k[x_1,\dots,x_e]/I \& S[x_1,\dots,x_e]/I \& K[x_1,\dots,x_e]/I \supseteq \p\\
	 {k} \& {S} \& K,
	\arrow["p_k",from=1-2, to=1-1]
	\arrow[from=2-1, to=1-1]
	\arrow[from=2-2, to=1-2]
	\arrow["p_K", from=1-2, to=1-3]
	\arrow[from=2-3, to=1-3]
	\arrow[from=2-2, to=2-1]
	\arrow[from=2-2, to=2-3]
\end{tikzcd}\]
where $\p$ denotes the maximal ideal corresponding to $y$, say $\p=(x_1-\lambda_1,\dots,x_e-\lambda_e)$. Then 
\begin{equation*}
\widehat{\mathcal{O}_{\mathcal{X}_{\overline{\eta}},y}}\cong \widehat{(K[x_1,\dots,x_e]/I)_\p}\cong K\llbracket x_1-\lambda_1,\dots,x_e-\lambda_e\rrbracket/I.
\end{equation*}

Choose $\m\subseteq k[x_1,\dots,x_e]/I$ maximal with $p_K^{-1}(\p)\subseteq p_k^{-1}(\m)$, i.e. $\m$ corresponds to a closed point $x\in X$ of the special fiber that is a specialization of $y$. 

Then $\widehat{(S[x_1,\dots,x_e]/I)_{p_k^{-1}(\m)}}\to S$ gives a singularity-deformation
\[\begin{tikzcd}[ampersand replacement=\&]
	 \widehat{\mathcal{O}_{X,x}}\cong \widehat{(k[x_1,\dots,x_e]/I)_\m} \& \widehat{(S[x_1,\dots,x_e]/I)_{p_k^{-1}(\m)}}\\
	 {k} \& {S,}
	\arrow[from=1-2, to=1-1]
	\arrow[from=2-1, to=1-1]
	\arrow[from=2-2, to=1-2]
	\arrow[from=2-2, to=2-1]
\end{tikzcd}\]
with geometric generic fiber
\begin{align*}
A:=\widehat{(S[x_1,\dots,x_e]/I)_{p_k^{-1}(\m)}}\otimes K \longleftarrow K.
\end{align*}
Since $p_K^{-1}(\p)\subseteq p_k^{-1}(\m)$, the morphism $S[x_1,\dots,x_e]/I\to K[x_1,\dots,x_e]/I$ induces a morphism $\widehat{(S[x_1,\dots,x_e]/I)_{p_k^{-1}(\m)}}\to \widehat{(K[x_1,\dots,x_e]/I)_\p}$ and further a morphism
\begin{align*}
\varphi:\, A\to \widehat{(K[x_1,\dots,x_e]/I)_\p}.
\end{align*}
Then $\varphi^{-1}(\p)$ is the maximal ideal $(x_1\otimes 1 - 1\otimes \lambda_1,\dots,x_e\otimes 1 - 1\otimes\lambda_e)\subseteq A$ and
\begin{align*}
\widehat{A_{\varphi^{-1}(\p)}} \cong K\llbracket x_1-\lambda_1,\dots,x_e-\lambda_e\rrbracket/I \cong \widehat{\mathcal{O}_{\mathcal{X}_{\overline{\eta}},y}}.
\end{align*}

We have therefore found $x\in X$ and a singularity-deformation of $\widehat{\mathcal{O}_{X,x}}$, such that the generic fiber of that deformation contains a point at which the completion of the local ring is isomorphic to $\widehat{\mathcal{O}_{\mathcal{X}_{\overline{\eta}},y}}$.
\end{proof}

Together with Lemma \ref{lem:emb_dim_doesnt_increase_sing}, we get
\begin{corollary}\label{cor:emb_dim_dosnt_increase_prop}
The embedding dimension of isolated singularities in the generic fiber of a proper deformation is bounded above by the embedding dimension of the isolated singularities of the special fiber.
\end{corollary}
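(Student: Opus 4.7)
The plan is to chain together Theorem~\ref{thm:proper_to_sing_def} and Lemma~\ref{lem:emb_dim_doesnt_increase_sing}, since the corollary is really just a bookkeeping consequence of these two results. Let $y \in \mathcal{X}_{\overline{\eta}}$ be an isolated singularity. By Theorem~\ref{thm:proper_to_sing_def}, there exists a closed point $x \in X$ (the specialization of $y$) and a singularity-deformation of $\widehat{\mathcal{O}_{X,x}}$ whose geometric generic fiber has a point at which the completion of the local ring is isomorphic to $\widehat{\mathcal{O}_{\mathcal{X}_{\overline{\eta}},y}}$.

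Write $e$ for the embedding dimension of $\widehat{\mathcal{O}_{X,x}}$, so that $\widehat{\mathcal{O}_{X,x}} \cong k\llbracket x_1, \dots, x_e \rrbracket / I$ with $I \subseteq \mathfrak{m}^2$. The singularity-deformation provided by Theorem~\ref{thm:proper_to_sing_def} is a priori given by a quotient of $S\llbracket y_1, \dots, y_{e'} \rrbracket$ for some potentially larger $e'$, but here I would invoke Lemma~\ref{lem:emb_dim_doesnt_increase_sing} to replace it by an isomorphic presentation $R = S\llbracket x_1, \dots, x_e \rrbracket / I'$ using only $e$ variables.

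Base-changing to $K = \overline{\Quot(S)}$ then presents the geometric generic fiber as a quotient of a polynomial-type ring in $e$ variables over $K$, and the completion at the maximal ideal $(x_1 - \lambda_1, \dots, x_e - \lambda_e)$ corresponding to $y$ is a quotient of $K\llbracket x_1 - \lambda_1, \dots, x_e - \lambda_e \rrbracket$, i.e.\ a quotient of a complete regular local ring in $e$ variables. The embedding dimension of such a quotient is at most $e$, which gives the bound $\mathrm{emb.dim}(\widehat{\mathcal{O}_{\mathcal{X}_{\overline{\eta}},y}}) \leq e = \mathrm{emb.dim}(\widehat{\mathcal{O}_{X,x}})$.

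There is no real obstacle here: the content of the statement has already been absorbed into Lemma~\ref{lem:emb_dim_doesnt_increase_sing}, which forces the singularity-deformation to be realizable in the same number of variables as the special fiber, and into Theorem~\ref{thm:proper_to_sing_def}, which reduces the global statement about $\mathcal{X}_{\overline{\eta}}$ to a statement about singularity-deformations. The only thing to verify is the elementary observation that passing to a localization and then to a completion of a quotient of $S\llbracket x_1, \dots, x_e \rrbracket$ along a closed point of the geometric generic fiber yields a quotient of a regular complete local ring of dimension $e$, whose embedding dimension is therefore at most $e$.
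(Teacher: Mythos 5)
Your proof is correct and follows exactly the route the paper has in mind: Theorem~\ref{thm:proper_to_sing_def} reduces the claim about the proper deformation $\mathcal{X}$ to a claim about a singularity-deformation of $\widehat{\mathcal{O}_{X,x}}$, and Lemma~\ref{lem:emb_dim_doesnt_increase_sing} lets you realize that singularity-deformation as a quotient of $S\llbracket x_1,\dots,x_e\rrbracket$ with $e$ equal to the embedding dimension of $\widehat{\mathcal{O}_{X,x}}$, so that completing at any closed point of the geometric generic fiber gives a quotient of $K\llbracket x_1-\lambda_1,\dots,x_e-\lambda_e\rrbracket$ and hence a ring of embedding dimension at most $e$. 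The paper states the corollary as an immediate consequence of these two results without writing out the chaining, which you have correctly supplied.
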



\section{Shifting}\label{sec:shifting}
In this section, we introduce automorphisms of singularity-deformations of toric surface singularities as above that we call \emph{shifting}. These automorphisms will help us significantly simplify the equations defining such a deformation. After some tedious computations, the main result is $\ref{thm:shifting}$, which shows that after suitable isomorphism, $I'$ contains elements $g_{i-1,i+1}'$ of the form
\begin{equation*}
g_{i-1,i+1}'=x_{i-1}(x_{i+1}+tc_{i+1})-x_i^{a_i}+tx_ih_i
\end{equation*}
for some $c_{i+1}\in S$ and $h_i\in S[x_i]$, $\deg_{x_i} h_i < a_i$.\\
This will be used in \ref{sec:sing_of_def} to calculate the singularities of the deformation.

Given a deformation of a toric surface singularity
\[\begin{tikzcd}[ampersand replacement=\&]
	\Spec k\llbracket  x_1,\dots,x_e\rrbracket /I \& {\Spec S\llbracket  x_1,\dots,x_e\rrbracket /I'} \\
	{\Spec k} \& {\Spec S,}
	\arrow[from=1-1, to=1-2]
	\arrow[from=1-1, to=2-1]
	\arrow["\pi"', from=1-2, to=2-2]
	\arrow["s", from=2-1, to=2-2]
\end{tikzcd}\]
we know that $I$ contains elements $g_{i-1,i+1}:=x_{i-1}x_{i+1}-x_i^{a_i}$. This implies that $I'$ contains elements
\begin{align*}
g_{i-1,i+1}'=g_{i-1,i+1}+th_{i-1,i+1}.
\end{align*}

Our goal is to successively simplify the $h_{i-1,i+1}$ by variable shifts $x_{i+1}\mapsto x_{i+1}+tc$, where $c$ is chosen such that $h_{i-1,i+1}$ becomes simpler. This might introduce some new factors notably in $g_{i,i+2}'$ and $g_{i+1,i+3}$, possibly making them more complicated. This ''shifts'' the problem of simplifying $g_{i-1,i+1}'$ to that of simplifying $g_{i,i+2}$ and $g_{i+1,i+3}$ and will be called \emph{shifting to the right at $i$}. Similarly, we will also introduce shifting to the left. 

In order to define shifting to the right at $i$, we first replace $h_{i-1,i+1}$ by a maximally $x_{i-1}$-divisible representative. We can then write $g_{i-1,i+1}'$ as 
\begin{equation*}
g_{i-1,i+1}'=x_{i-1}(x_{i+1}+th_{i-1,i+1}')-x_i^{a_i}+th_{i-1,i+1}^++th_{i-1,i+1}^-,
\end{equation*}
where $h_{i-1,i+1}^+$ is constant in all $x_j$ for $j\leq i-1$ and $h_{i-1,i+1}^-$ is constant in all $x_j$ for all $j\geq i-1$.\\

In order to shorten notation, write $R:=S\llbracket  x_1,\dots,x_e\rrbracket /I'$.

\begin{lemma}
Let $\varphi$ be the morphism $R\to R$ defined by sending $x_j\mapsto x_j$ for all $j\neq i+1$ and $x_{i+1}\mapsto x_{i+1}+x_{i+1}u_1+u_2$ where $u_1$ and $u_2$ are elements of the maximal ideal of $R$ . Then $\varphi$ is an isomorphism.
\end{lemma}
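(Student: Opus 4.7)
The plan is to build an explicit two-sided inverse $\psi\colon R\to R$ to $\varphi$. Since $R$ is a complete local Noetherian ring with maximal ideal $\m=(t,x_1,\dots,x_e)$ and $u_1\in\m$, the element $1+u_1$ is a unit of $R$, and one would formally like to set $\psi(x_{i+1})=(x_{i+1}-u_2)(1+u_1)^{-1}$. The subtlety is that $u_1$ and $u_2$ themselves involve $x_{i+1}$, so this is really the implicit equation
\[ y\bigl(1+u_1(y)\bigr)+u_2(y)=x_{i+1} \]
for $y=\psi(x_{i+1})$, where $u_k(y)$ denotes the result of substituting $y$ for $x_{i+1}$ in $u_k$.

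First I would construct $y$ by successive approximation. Set $y_0:=x_{i+1}$ and recursively
\[ y_{n+1}:=\bigl(x_{i+1}-u_2(y_n)\bigr)\bigl(1+u_1(y_n)\bigr)^{-1}. \]
Expanding $y_{n+1}-y_n$ and using that $u_k(y_n)-u_k(y_{n-1})$ is divisible by $y_n-y_{n-1}$ with an additional factor drawn from the ideal supporting the perturbations $u_1,u_2$, one shows that the sequence is Cauchy in the relevant adic topology. Completeness of $R$ then provides a unique limit $y\in R$ satisfying the implicit equation.

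Next I would define $\psi$ by $\psi(x_j):=x_j$ for $j\neq i+1$ and $\psi(x_{i+1}):=y$, extended to a continuous $S$-algebra endomorphism of $R$. The defining equation for $y$ gives
\[ \psi(\varphi(x_{i+1}))=\psi\bigl(x_{i+1}(1+u_1)+u_2\bigr)=y\bigl(1+u_1(y)\bigr)+u_2(y)=x_{i+1}, \]
while $\psi\circ\varphi$ agrees with the identity on the remaining topological generators $t,x_1,\dots,x_i,x_{i+2},\dots,x_e$, so $\psi\circ\varphi=\mathrm{id}_R$ by continuity. For the other composition one can either run the symmetric iteration (solving $\varphi(z)=x_{i+1}$) or invoke the classical fact that a surjective endomorphism of a Noetherian ring is an isomorphism.

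The main obstacle I expect is the convergence estimate for $y_n$: substituting an element of $\m$ for $x_{i+1}$ inside an arbitrary element of $\m$ need not land in a deeper piece of the $\m$-adic filtration, so the contraction has to come from the specific shape of the perturbations $u_1,u_2$. In the shifting applications these perturbations carry an extra factor of $t$, which supplies the required $t$-adic contraction; once this estimate is established the remainder of the argument is a formal verification.
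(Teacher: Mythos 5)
Your approach is essentially the same as the paper's: both construct a preimage of $x_{i+1}$ (equivalently, an inverse of $\varphi$) by successive approximation in the $t$-adic topology, using completeness of $R$ to pass to the limit. The paper phrases it as a contradiction argument --- suppose the best $t$-adic approximation of a preimage of $x_{i+1}$ stops at $t$-degree $m-1$, then improve it by one degree --- while you set up an explicit fixed-point iteration solving $y(1+u_1(y))+u_2(y)=x_{i+1}$; these are cosmetically different packagings of the same Newton-type argument. The paper simply asserts that injectivity is clear; you handle it more cleanly by noting that a surjective endomorphism of a Noetherian ring is automatically an isomorphism.

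The obstacle you flag at the end is a real one, and it applies equally to the paper's own proof. The paper's crucial step is the unjustified assertion that every element of $R$ coincides with its image under $\varphi$ at least up to the minimal $t$-degree among its monomials, i.e.\ that $z-\varphi(z)\in(t^{d+1})$ when $d$ is the minimal $t$-degree occurring in $z$. This requires $x_{i+1}u_1+u_2\in(t)$, which does not follow from $u_1,u_2\in\m$ alone; indeed the lemma as literally stated is false (take $u_1=0$, $u_2=-x_{i+1}$, which sends $x_{i+1}$ to $0$ and is neither injective nor surjective). In every application in the shifting section the substitution has the form $x_{i+1}\mapsto x_{i+1}+th'$, so the needed factor of $t$ is always present, and both your iteration and the paper's inductive improvement converge under that stronger hypothesis. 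Your observation that the contraction must come from the $t$-adic, rather than merely the $\m$-adic, structure of the perturbation is exactly the point the paper leaves implicit, and the hypothesis of the lemma ought to be tightened accordingly.
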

\begin{proof}
It is clear that $\varphi$ is well-defined and injective. For surjectivity it is enough to show that $x_{i+1}$ is in the image. Assume that there exists no element in the image of $\varphi$ that coincides with $x_{i+1}$ up to $t$-degree $m$ and let $m$ be minimal with this property. Then there exists some $y\in R$ with $\varphi(y)-x_{i+1}=u\in (t^m)$ and we have
\begin{align*}
\varphi(y-u)-x_{i+1}=\varphi(y)-x_{i+1}-\varphi(u) = u-\varphi(u).
\end{align*}
As every element of $R$ coincides with its image under $\varphi$ at least up to the minimal $t$-degree among its monomials, we have $u-\varphi(u)\in (t^{m+1})$. We have found an element in the image of $\varphi$ coinciding with $x_{i+1}$ up tp $t$-degree $m$, a contradiction. This shows that $\varphi$ is surjective.
\end{proof}

This lemma allows us to take the inverse of the morphism $x_{i+1}\mapsto x_{i+1}+th_{i-1,i+1}'$, which will send $g_{i-1,i+1}'$ to $x_{i-1}x_{i+1}-x_i^{a_i}+th_{i-1,i+1}^++th_{i-1,i+1}^-$. Applying this morphism is what we will call \emph{shifting to the right at $i$} and write it as $\varphi_i^+$. Similarly, we can also \emph{shift to the left at $i$}, which we denote by $\varphi_i^-$.

Let us calculate the low-$t$-degree parts of $\varphi_i^+(x_{i+1})$. As described above we have
\begin{equation*}
\varphi_i^+(x_{i+1}-th_{i-1,i+1}')-x_{i+1}=t(h_{i-1,i+1}'-\varphi_i^+(h_{i-1,i+1}'))\in (t^{2 + \deg_t^{\min} h_{i-1,i+1}'}).
\end{equation*}
Therefore $x_{i+1}-th_{i-1,i+1}'$ coincides with $\varphi_i^+(x_{i+1})$ at least up to $(t^{2+\deg_t^{\min} h_{i-1,i+1}'})$. Furthermore, if $h_{i-1,i+1}'$ is not divisible by $x_{i+1}$, it is clear that the equation
$\varphi_i^+(x_{i+1})=x_{i+1}-th_{i-1,i+1}'$ holds in $R$.
Combining these to facts we get that 
\begin{equation*}
\varphi_i^+(x_{i+1})=x_{i+1}-th_{i-1,i+1}'
\end{equation*}
holds up to the minimal $t$-degree among monomials in $h_{i-1,i+1}'$ that are divisible by $x_{i+1}$.\footnote{It even holds up to $t$-degree 2 more then that, but we will not need that and it makes notation even harder.}

We will first shift to the right at $i=2,3,4,\dots,e-1$ (''fully shifting to the right'') and then shift to the left at $e-1,e-2,\dots,2$ (''fully shifting to the left''). 
Our goal is to show that repeatedly fully shifting to the right and the left converges and that the resulting $g_{i-1,i+1}'$ are of a simple form.

In order to say something about how fully shifting both ways changes the $g_{i-1,i+1}$, let us look at how shifting to the right at $i$ influences $g_{i,i+2}$ and $g_{i+1,i+3}$.

Let $m_i$ be the minimal $t$-degree among monomials in $x_{i+1}+th_{i-1,i+1}'$. Then
\begin{align*}
\varphi_i^+(g_{i,i+2}') &= \varphi_i^+(x_ix_{i+2}-x_{i+1}^{a_{i+1}}+th_{i,i+2})\\
&=x_ix_{i+1}-\varphi_i^+(x_{i+1})^{a_{i+1}}+t\varphi_i^+(h_{i,i+2})\\
&\in x_ix_{i+1}-(x_{i+1}-th_{i-1,i+1}'+(t^{1+m_i}))^{a_{i+1}}+th_{i,i+2}+(t^{1+m_i})\\
&= x_ix_{i+1}-x_{i+1}^{a_{i+1}}-a_{i+1}x_{i+1}^{a_i-1}th_{i-1,i+1}'+th_{i,i+2}+(t^{1+m_i})\\
&= x_ix_{i+1}-x_{i+1}^{a_{i+1}}+th_{i,i+2}-a_{i+1}x_{i+1}^{a_i-1}th_{i-1,i+1}'+(t^{1+m_i}).
\end{align*}
This shows that up to $t$-degree $m_i$, the image of $g_{i,i+2}'$ differs from $g_{i,i+2}$ only by $a_{i+1}x_{i+1}^{a_i-1}th_{i-1,i+1}'$.
Furthermore,
\begin{align*}
\varphi_i^+(g_{i+1,i+3}') &= \varphi_i^+(x_{i+1}x_{i+3}-x_{i+2}^{a_{i+2}}+th_{i+1,i+3})\\
&=\varphi_i^+(x_{i+1})x_{i+3}-x_{i+2}^{a_{i+2}}+t\varphi_i^+(h_{i,i+2})\\
&\in(x_{i+1}-th_{i-1,i+1}'+(t^{1+m_i}))x_{i+3}-x_{i+2}^{a_{i+2}}+th_{i,i+2}+(t^{1+m_i})\\
&=x_{i+1}x_{i+3}-th_{i-1,i+1}'x_{i+3}-x_{i+2}^{a_{i+2}}+th_{i,i+2}+(t^{1+m_i})\\
&=x_{i+1}x_{i+3}-x_{i+2}^{a_{i+2}}+th_{i,i+2}-th_{i-1,i+1}'x_{i+3}+(t^{1+m_i})
\end{align*}

This shows that up to $t$-degree $m_i$, the image of $g_{i+1,i+3}'$ differs from $g_{i+1,i+3}$ only by $-th_{i-1,i+1}'x_{i+3}$.\\

Let us now work through fully shifting both ways.

We start with $g_{i-1,i+1}'=x_{i-1}x_{i+1}-x_i^{a_i}+th_{i-1,i+1}$ for all $i=2,3,\dots,e-1$. After fully shifting to the right, all the $h_{i-1,i+1}$ contain no term quasi-divisible by $x_{i-1}$.

We now shift to the left at $e-1$. Then $h_{e-2,e}$ still doesn't contain any term quasi-divisible by $x_{e-2}$ or $x_e$. However, we have added a term of the form $a_{e-2}x_{e-2}^{a_{e-2}-1}th_{e-2,e}'$ to $h_{e-3,e-1}$, where $x_eh_{e-2,e}'$ are the terms in $h_{e-2,e}$ that were multiples of $x_e$ (the ''parts that got shifted to the left''). We have also added some terms of higher $t$-degree, but we will ignore them for now.

After shifting to the left at $e-2$, does $h_{e-3,e-1}$ contain any terms of $t$-degree 1 quasi-divisible by $x_{e-3}$ (terms that could be shifted to the right)? Before shifting to the left at $e-1$, it didn't, so we only need to consider what changed, i.e. $a_{e-2}x_{e-2}^{a_{e-2}-1}th_{e-2,e}'$. We know that no term in $x_eh_{e-2,e}'$ is quasi-divisible by $x_{e-2}$, as we had previously shifted to the right.  This implies that $h_{e-2,e}'$ is constant in all $x_j$ with $j\leq e-2$. Therefore, every monomial in $a_{e-2}x_{e-2}^{a_{e-2}-1}th_{e-2,e}'$ is constant in $x_j$ for all $j\leq e-3$, showing that there are no terms in $h_{e-3,e-1}$ constant in $t$ that could be shifted to the right.

Let us now shift to the left at $e-3$ and ask whether $h_{e-4,e-2}$ then contains any terms constant in $t$ that could be shifted to the right, i.e. terms quasi-divisible by $x_{e-4}$. After we first fully shifted to the right, all parts of $h_{e-4,e-2}$ were constant in $x_{e-4}$, so that we only have to look at what got added by shifting to the left at $e-1$ and $e-2$. The corresponding terms are $-th_{e-2,e}'x_{e-4}$ and $a_{e-3}x_{e-3}^{a_{e-3}-1}th_{e-3,e-1}'$. Similar to before, $a_{e-3}x_{e-3}^{a_{e-3}-1}th_{e-3,e-1}'$ can't add anything quasi-divisible by $x_{e-4}$. On the other hand, $-th_{e-2,e}'x_{e-4}$ is clearly divisible by $x_{e-4}$ and could be shifted to the right, so that the question to ask becomes which parts of $-th_{e-2,e}'x_{e-4}$ will remain after shifting to the left at $x_{e-3}$? Only those parts constant in $x_{e-2}$. We have previously established that $h_{e-2,e}'$ is constant in all $x_j$ with $j\leq e-2$. If any monomial in $h_{e-2,e}'$ is non-constant in any $x_j$ with $j\geq e-2$, then multiplying the monomial with $x_{e-4}$ will give something quasi-divisible by $x_{e-2}$ and will be shifted away to the left. Therefore, the only terms of $h_{e-4,e-2}$ constant in $t$ and quasi-divisible by $x_{e-4}$ are of the form $cx_{e-4}$, where $c\in k$. Note that the $t$-constant part of $h_{e-4,e-2'}$ (what is shifted to the left) is constant in $x_j$ for all $j\leq e-4$. 

An inductive argument shows that after fully shifting both ways, the only parts of $h_{i-1,i+1}$ constant in $t$ and quasi-divisible by $x_{i-1}$ are of the form $c_{i+1}x_{i-x}$ with $c\in k$.
One quickly checks that these elements will always be shifted back and forth with further shifting, but don't influence the shifting otherwise.\\

We have reached the following intermediate result.
\begin{lemma}
After fully shifting both ways, the $g_{i-1,i+1}'$ are of the form
\begin{align*}
g_{i-1,i+1}'=x_{i-1}(x_{i+1}+tc_{i+1})-x_i^{a_i}+tx_ih_i+t^2H_{i-1,i+1}
\end{align*}
for all $i=2,3,\dots,e-1$, where $c_{i+1}\in k$ and $h_i\in k[x_i]$ with $\deg h_i < a_i$.
\end{lemma}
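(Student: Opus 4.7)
The plan is to keep track only of the $t^1$-coefficient of each $g_{i-1,i+1}'$, since any perturbation of $t$-degree $\ge 2$ is automatically absorbed into the claimed $t^2H_{i-1,i+1}$ term. Write
\[
h_{i-1,i+1}=h^{(0)}_{i-1,i+1}+t\,\tilde h_{i-1,i+1},\qquad h^{(0)}_{i-1,i+1}\in k[x_1,\dots,x_e],
\]
so the goal reduces to showing that after the prescribed chain of shifts one has $h^{(0)}_{i-1,i+1}=c_{i+1}x_{i-1}+x_ih_i$ with $c_{i+1}\in k$ and $h_i\in k[x_i]$ of degree less than $a_i$.

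I would start by executing the full right-shift at $i=2,3,\dots,e-1$ and invoking the two facts established in the body of the section: shift-right at $i$ (a) removes every $x_{i-1}$-divisible monomial from $h^{(0)}_{i-1,i+1}$, and (b) perturbs $h_{i,i+2}$ and $h_{i+1,i+3}$ at the $t^1$-level only by terms that are $x_j$-constant for $j<i$. Since later shifts at $j>i$ never touch $h_{i-1,i+1}$, once the shift at $i$ has happened the $x_{i-1}$-absent condition holds permanently; iterating (b) one index at a time (the same inductive observation the text makes for $h'_{e-2,e}$) also shows that the monomials surviving in $h^{(0)}_{i-1,i+1}$ can use only variables $x_j$ with $j\ge i-1$.

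Next I would run the full left-shift at $i=e-1,e-2,\dots,2$. The mirror statements (a$'$) and (b$'$) hold: left-shift at $i$ kills every $x_{i+1}$-divisible monomial of $h^{(0)}_{i-1,i+1}$, and its side-effects on $h_{i-2,i}$ and $h_{i-3,i-1}$ are $x_j$-constant for $j>i$. This is the only stage at which $x_{i-1}$-divisible terms can be reintroduced into a downstream $h^{(0)}$; propagating the detailed book-keeping that the text carries out for $h_{e-3,e-1}$ and $h_{e-4,e-2}$ inductively along the whole sequence, one sees that the only $x_{i-1}$-divisible monomial that can survive in $h^{(0)}_{i-1,i+1}$ is the pure linear term $c_{i+1}x_{i-1}$, and symmetrically that no $x_{i+1}$-divisible monomial survives.

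Combining both trackings, $h^{(0)}_{i-1,i+1}$ equals $c_{i+1}x_{i-1}$ plus a term that involves no $x_{i-1}$, no $x_{i+1}$, and only variables $x_j$ with $i-1\le j\le i+1$; this forces the residue to be a polynomial in $x_i$ with no constant term, i.e.\ of the form $x_ih_i$. To bound $\deg h_i<a_i$ I would use the congruence $x_i^{a_i}\equiv x_{i-1}x_{i+1}\pmod{t\cdot S\llbracket x_1,\dots,x_e\rrbracket}$ inside $R$: any $x_i^{a_i}$-factor of $h_i$ can be rewritten as $x_{i-1}x_{i+1}$ plus $t$-higher-order debris, and after recompacting the $x_{i-1}x_{i+1}$ back into pure $x_i$-powers the error drops into $t^2H_{i-1,i+1}$. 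The main obstacle lies in the second step: the cascade of side-effects during the full left-shift has to be controlled precisely enough to confirm that nothing besides $c_{i+1}x_{i-1}$ leaks back into the $x_{i-1}$-divisible part of any $h^{(0)}_{i-1,i+1}$, which is exactly the content of the text's two-index analysis extended systematically to every $i$.
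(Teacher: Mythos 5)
Your proposal correctly identifies the overall strategy (track only $t$-degree~1, use right shifts to kill $x_{i-1}$-divisible parts, left shifts to kill $x_{i+1}$-divisible parts, observe that the linear term $c_{i+1}x_{i-1}$ is the only survivor that can bounce between the two), and your degree bound argument via $x_i^{a_i}\equiv x_{i-1}x_{i+1}\pmod{t}$ is a nice way to package what the paper leaves implicit. However, there is a genuine gap in the middle step. You claim that tracking the side-effects of shifting alone shows that after a full right shift ``the monomials surviving in $h^{(0)}_{i-1,i+1}$ can use only variables $x_j$ with $j\ge i-1$,'' and symmetrically after the left shift. That does not follow. Shifting to the right at $i$ removes exactly the $x_{i-1}$-\emph{quasi-divisible} monomials of $h_{i-1,i+1}$; it leaves untouched any monomial that is constant in $x_{i-1}$ but still involves $x_{i-2},x_{i-3},\dots$, and such monomials can be present in the \emph{original} data before any shifting happens. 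Your observation~(b) only controls what new terms get \emph{introduced} into the downstream $h$'s, not what was there to begin with, so the induction you sketch cannot rule out, say, a term $x_{i-3}^2$ sitting in $h^{(0)}_{i-1,i+1}$ after all shifts are done.

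The paper closes precisely this gap with a separate argument that you do not reproduce: it plugs the shifted generators into the syzygy $x_{l+2}g_{l-1,l+1}'+x_l^{a_l-1}g_{l,l+2}'-x_{l+1}g_{l-1,l+2}'=0$, divides by the nonzerodivisor $t$, and compares compacted $t$-constant terms to show that any surviving monomial non-constant in some $x_k$ with $k>l+1$ (or $k<l-1$) would force an impossible cancellation. This use of the relations between the $g_{i,j}$ is the key ingredient your sketch is missing, and without it the claim that the residual part of $h^{(0)}_{i-1,i+1}$ is a polynomial in $x_i$ alone is unsupported.
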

\begin{proof}
We have established above that after fully shifting right and then left, the $t$-constant terms of $h_{i-1,i+1}$ are constant in $x_{i+1}$ and the only term quasi-divisible by $x_{i-1}$ is of the form $x_{i-1}c_{i+1}$.

It is left to show that any $t$-constant term of $h_{i-1,i+1}$ is constant in $x_j$ for $j<i-1$ and $j>i+1$.
Assume that there exists some $l$ such that $h_{l-1,l+1}$ contains a term constant in $t$ and non-constant in $x_k$ with $k>l+1$ and let $l$ be maximal with this property. Consider the element $x_{l+2}g_{l-1,l+1}'+x_l^{a_l-1}g_{l,l-2}'-x_{l+1}g_{l-1,l+2}'=0\in R$. This equals
\begin{align*}
t\left(x_{l+2}h_{l-1,l+1}+x_l^{a_l-1}h_{l,l-2}-x_{l+1}h_{l-1,l+2}\right).
\end{align*}
As $t$ is a non-zero-divisor in $R$, we have
\begin{align*}
x_{l+2}h_{l-1,l+1}+x_l^{a_l-1}h_{l,l-2}-x_{l+1}h_{l-1,l+2}=0\in R.
\end{align*}
Any parts of $h_{l-1,l+1}$ quasi-divisible by $x_k$ give terms in the above equation that are constant in $x_{l+1}$. However, all terms in $x_l^{a_l-1}h_{l,l-2}-x_{l+1}h_{l-1,l+2}$ are constant in all $x_j$ with $j\geq l+2$ or quasi-divisible by $x_{l+1}$. Therefore, the $x_k$-quasi-divisible terms of $h_{l-1,l+1}$ sum up to 0, a contradiction.

A symmetric argument shows that any $t$-constant term of $h_{i-1,i+1}$ is constant in $x_j$ for $j<i-1$, which finishes the proof of this lemma.
\end{proof}

The lemma above is the first step in an inductive argument for the following.
\begin{theorem}\label{thm:shifting}
After suitable isomorphism, the $g_{i-1,i+1}'$ are of the form
\begin{equation*}
g_{i-1,i+1}'=x_{i-1}(x_{i+1}+tc_{i+1})-x_i^{a_i}+tx_ih_i
\end{equation*}
for some $c_{i+1}\in S$ and $h_i\in S[x_i]$, $\deg_{x_i} h_i < a_i$. 
\end{theorem}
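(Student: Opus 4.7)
My plan is to proceed by induction on $n \geq 1$, proving that after a suitable automorphism one can arrange
\begin{equation*}
g_{i-1,i+1}' \equiv x_{i-1}(x_{i+1}+tc_{i+1}^{(n)})-x_i^{a_i}+tx_ih_i^{(n)} \pmod{t^{n+1}}
\end{equation*}
for all $i = 2,\dots,e-1$, where $c_{i+1}^{(n)} \in S$ and $h_i^{(n)} \in S[x_i]$ with $\deg_{x_i} h_i^{(n)} < a_i$. The base case $n=1$ is precisely the preceding lemma, using $x_i^{a_i} \equiv x_{i-1}x_{i+1}$ modulo higher-$t$ terms to reduce any $x_i$-monomials of degree $\geq a_i$ that might appear in $h_i^{(1)}$ down to degree $<a_i$.

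For the inductive step, suppose the form has already been achieved modulo $t^n$. The obstruction to extending it modulo $t^{n+1}$ lies in the $t^n$-coefficient of $h_{i-1,i+1}$. I would apply one further round of fully shifting to the right and then to the left, but this time using shift parameters of $t$-degree exactly $n$: substitute $x_{i+1}\mapsto x_{i+1}+t^n c$, where $c$ is chosen so as to kill the $t^n$-part of $h_{i-1,i+1}$ that is quasi-divisible by $x_{i-1}$, and analogously for the leftward shift. Such a substitution is the identity modulo $t^n$ and so preserves the structure already established at lower $t$-degrees. The formulas governing how such a shift perturbs $g_{i,i+2}'$ and $g_{i+1,i+3}'$ are identical to those computed in the base case after a uniform upward shift of all $t$-degrees by $n-1$, so the bookkeeping in the earlier lemma carries over verbatim. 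In particular, the syzygy $x_{l+2}g_{l-1,l+1}' + x_l^{a_l-1}g_{l,l+2}' - x_{l+1}g_{l-1,l+2}' = 0$ in $R$, read off at the $t^{n+1}$-component, yields the same linear constraints on the $t^n$-coefficients of the $h_{l-1,l+1}$ that the $t^0$-components had to satisfy in the base case. Hence after fully shifting both ways at this new $t$-level, the remaining $t^n$-part of $h_{i-1,i+1}$ takes the form $x_{i-1}c_{i+1}^{(n)} + x_i h_i^{(n)}$ with $h_i^{(n)} \in S[x_i]$ of $x_i$-degree less than $a_i$, completing the inductive step.

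The remaining technical concern is the convergence of the infinite composition of automorphisms. This is handled by the $t$-adic completeness of $R$: the $n$-th round of shifting is the identity modulo $t^n$, so the partial compositions form a Cauchy sequence in the group of continuous $S$-algebra automorphisms of $R$ and converge to a well-defined $S$-algebra automorphism $\Phi$. Under $\Phi$, the $g_{i-1,i+1}'$ assume the claimed form with $c_{i+1} = \sum_{n \geq 1} t^n c_{i+1}^{(n)} \in S$ and $h_i = \sum_{n \geq 1} t^n h_i^{(n)} \in S[x_i]$, still of $x_i$-degree less than $a_i$ since that bound is respected at every stage. In my view the main obstacle is not the convergence but rather the careful verification that the syzygy argument of the base-case lemma produces genuinely the same linear constraints at every $t$-degree; one has to confirm that the already-established lower-$t$-degree structure does not contribute spurious terms at $t$-degree $n$ that could obstruct the inductive step, which requires tracking that every correction introduced by a shift at stage $m < n$ respects the shape assumed in the inductive hypothesis.
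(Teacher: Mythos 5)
Your proposal follows essentially the same route as the paper's proof: induction on the $t$-degree, a further round of full shifting at each level (which, since the lower degrees are already in standard form, amounts to your substitutions $x_{i+1}\mapsto x_{i+1}+t^nc$), the syzygy $x_{l+2}g_{l-1,l+1}'+x_l^{a_l-1}g_{l,l+2}'-x_{l+1}g_{l-1,l+2}'=0$ read off in $t$-degree $n$ to force the vanishing of the remaining terms involving distant variables, and $t$-adic completeness of $R$ to make the infinite composition of automorphisms converge. The verification you single out as the main remaining obstacle --- that compacting the already-standardized lower-$t$-degree parts does not inject spurious degree-$n$ terms into the syzygy --- is exactly the bookkeeping the paper's proof carries out explicitly, and it goes through.
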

\begin{proof}
The previous lemma shows that it is possible to find an isomorphism such that this holds up to $t$-degree 1. Let us assume that we have found an isomorphism such that the equation holds up to $t$-degree $m$ for all $i$. We then fully shift both ways and by arguments analog to the $t$-degree 1 case we now have that we can write
\begin{equation*}
g_{i-1,i+1}'=x_{i-1}(x_{i+1}+tc_{i+1})-x_i^{a_i}+tx_ih_i+t^{m+1}H_i+t^{m+2}H_{i-1,i+1},
\end{equation*}
where $c_{i+1}$ and $h_i$ are as above and $H_i$ is constant in $x_{i-1}$, $x_{i+1}$ and $t$. We want to show $H_i=0$. As monomials in $H_i$ cannot be multiples of both, some $x_j$ with $j< i-1$ and some $x_l$ with $l>i+1$, we can split $H_i$ into $H_i^-$ and $H_i^+$, containing all monomials divisible by some $x_j$ with $j<i-1$ and $j>i+1$ respectively. Assume now that $H_l^+\neq 0$ for some $l$ and let $l$ be maximal in that regard. We again consider
\begin{align*}
x_{l+2}h_{l-1,l+1}+x_l^{a_l-1}h_{l,l+2}-x_{l+1}h_{l-1,l+2}=0\in R,
\end{align*}
where w.l.o.g. we assume that $h_{l-1,l+1}$, $h_{l,l+2}$ and $h_{l-1,l+2}$ are in compact form. We know that up to $t$-degree $m-1$ both $x_{l+2}h_{l-1,l+1}$ and $x_l^{a_l-1}h_{l,l+2}$ are constant in $x_j$ for $j\geq l+3$ and $j<l-1$, which implies that this also holds for $x_{l+1}h_{l-1,l+2}$.

We need to take a look at what happens when compacting the equation above up to $t$-degree $m-1$. First, $x_{l+2}h_{l-1,l+1}$ may need compacting by replacing $x_{l+2}x_{l-1}$ with $x_l^{a_l-1}x_{l+1}^{a_{l+1}-1}-th_{l-1,l+2}$. Next, $x_l^{a_l-1}h_{l,l+2}$ is already in compact form. Finally, when compacting $x_{l+1}h_{l-1,l+2}$ we might need to replace $x_{l-1}x_{l+1}$ with $x_l^{a_l}+th_{l-1,l+1}$. In $t$-degree $m$, this can only add terms to the equation that are present in $h_{l-1,l+1}$ and $h_{l-1,l+2}$ in $t$-degrees less than $m$. Any such terms in $h_{l-1,l+1}$ must be constant in all $x_j$ for $j\geq l+1$, and all such terms in $h_{l-1,l+2}$ are constant in all $x_j$ for $j\geq l+3$ and terms quasi-divisible by $x_{l+2}$ must also be quasi-divisible by $x_l$ (their compact form can't contain $x_{l+2}$ as the compact forms of $x_{l+2}h_{l-1,l+1}$ and $x_l^{a_l-1}h_{l,l+2}$ don't). This tells us that compacting the terms of $t$-degree less than $m$ of the equation doesn't add any term to the equation of $t$-degree $m$, that is constant in $x_{l+1}$ but non-constant in some $x_j$ with $j\geq l+2$.

Therefore, after completely compacting the equation, and then looking at the $t$-degree $m$ terms, we get that the only terms divisible by some $x_j$ with $j\geq l+2$ and constant in $x_{l+1}$ come from $x_{l+2}H_l^+$. This implies $x_{l+2}H_l^+=0$ and therefore
\begin{equation*}
H_l^+=0\in R,
\end{equation*}
in contradiction to $H_l^+\neq 0$.

Again, by symmetry we also have $H_i^-=0$ for all $i=2,\dots,e-1$.

We have therefore shown that after suitable isomorphism, the equation
\begin{equation*}
g_{i-1,i+1}'=x_{i-1}(x_{i+1}+tc_{i+1})-x_i^{a_i}+tx_ih_i
\end{equation*}
holds up to $t$-degree $m+1$. This finishes the induction step.

As $R$ is complete this process converges to give us the wanted isomorphism under which

\begin{equation*}
g_{i-1,i+1}'=x_{i-1}(x_{i+1}+tc_{i+1})-x_i^{a_i}+tx_ih_i.
\end{equation*}

\end{proof}

Now that we know very precisely how the $g_{i-1,i+1}'$ look like, we can also say something about the $g_{i,j}'$ with $j-i>2$.

\begin{corollary}
The ideal $I'$ contains elements of the form
\begin{align*}
g_{i,j}'=x_i(x_j+tc_j)-x_{i+1}^{a_{i+1}-1}\left(x_{i+2}^{a_{i+2}-2}\cdots x_{j-2}^{a_{j-2}-2}\right)x_{j-1}^{a_{j-1}-1} + th_{i,j},
\end{align*}
where $h_{i,j}\in S\llbracket  x_{i+1},\dots,x_{j-1}\rrbracket $.
\end{corollary}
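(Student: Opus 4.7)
The plan is to proceed by induction on $d := j - i \geq 2$. The base case $d = 2$ is precisely Theorem~\ref{thm:shifting}: take the constant $c_j$ furnished there and note that $h_{i,j} = x_i h_i$ lies in $S\llbracket x_i\rrbracket \subseteq S\llbracket x_{i+1}, \ldots, x_{j-1}\rrbracket$.

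For the inductive step, I would start from the polynomial identity in $S\llbracket x_1, \ldots, x_e\rrbracket$
\[
x_{j-1}\, g_{i,j} = x_j\, g_{i, j-1} + M_{\mathrm{coef}}\, g_{j-2, j}, \qquad M_{\mathrm{coef}} := x_{i+1}^{a_{i+1}-1}\bigl(x_{i+2}^{a_{i+2}-2}\cdots x_{j-2}^{a_{j-2}-2}\bigr),
\]
which is the second syzygy from the introduction specialized to these indices and is immediate from the definitions of the $g_{i,j}$. By the inductive hypothesis, $g_{i, j-1}'$ and $g_{j-2, j}'$ lie in $I'$ in the claimed form, so $G := x_j\, g_{i, j-1}' + M_{\mathrm{coef}}\, g_{j-2, j}' \in I'$. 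Substituting these forms and using $x_{j-2} M_{\mathrm{coef}} = M_{i, j-1}$ and $x_{j-1}^{a_{j-1}} M_{\mathrm{coef}} = x_{j-1} M_{i, j}$ causes the $\pm x_j M_{i, j-1}$ contributions to cancel, leaving
\[
G = x_{j-1}\bigl(x_i x_j - M_{i, j} + t h_{j-1} M_{\mathrm{coef}}\bigr) + t\bigl[c_{j-1} x_i x_j + x_j h_{i, j-1} + c_j M_{i, j-1}\bigr].
\]

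Next, I would search for $g_{i, j}' = x_i(x_j + tc_j) - M_{i, j} + t h_{i, j}$ with $h_{i, j} \in S\llbracket x_{i+1}, \ldots, x_{j-1}\rrbracket$ by requiring $G - x_{j-1}\, g_{i, j}' \in I'$. Reducing modulo $t$, this becomes $x_{j-1}(h_{i, j} - h_{j-1} M_{\mathrm{coef}}) \equiv c_{j-1} x_i x_j + x_j h_{i, j-1} \pmod{I}$. The right-hand side is $x_{j-1}$-divisible modulo $I$, since for every $l$ with $i \leq l < j-1$ one has $x_l x_j \equiv M_{l, j} \pmod I$ and each $M_{l, j}$ contains the factor $x_{j-1}^{a_{j-1}-1}$ (with $a_{j-1} \geq 2$ in the reduced form of the singularity). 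Hence a solution $h_{i, j}$ exists mod $t$ inside $S\llbracket x_{i+1}, \ldots, x_{j-1}\rrbracket$. Iterating this procedure $t$-adically---lifting a known solution modulo $t^n$ to one modulo $t^{n+1}$ at each stage---and invoking completeness of $R$ yields the desired $h_{i, j}$. Primality of $I'$ (shown earlier) combined with $x_{j-1} \notin I'$ then gives $g_{i, j}' \in I'$.

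The main obstacle is the $t$-adic bookkeeping: at each stage of the iteration one must verify that the residual error is again $x_{j-1}$-divisible modulo $I$ and that the chosen correction remains in $S\llbracket x_{i+1}, \ldots, x_{j-1}\rrbracket$, so that the limit $h_{i, j}$ lies in the required subring. This is directly analogous to the inductive $t$-degree argument used in the proof of Theorem~\ref{thm:shifting}, but with the extra constraint that only the ``middle'' variables appear in the correction at each step.
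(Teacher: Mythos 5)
Your overall skeleton (induction on $j-i$, a syzygy among the $g_{i,j}$, flatness of $t$, primality of $I'$) is close in spirit to the paper's, and your computation of $G$ is correct; but the paper argues in the opposite direction, and your version has a genuine gap exactly at the step you yourself flag. The paper does not construct $g_{i,j}'$: it starts from an arbitrary lift $g_{i,j}'=g_{i,j}+th_{i,j}\in I'$, which exists automatically because elements of $I$ lift to $I'$, puts $h_{i,j}$ in compact form, and uses the syzygy $x_{j-2}g_{i,j}'-x_ig_{j-2,j}'-x_{j-1}^{a_{j-1}-1}g_{i,j-1}'=0$ in $R$ (where $t$ is a nonzerodivisor) to show that $h_{i,j}$ cannot involve any $x_l$ with $l\geq j$; the constraint for $l\leq i$ and the $x_ic_j$ term are then produced by fully shifting right and back left. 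You instead try to \emph{build} $h_{i,j}$ by $t$-adic successive approximation from the other syzygy ($x_{j-1}g_{i,j}=x_jg_{i,j-1}+M_{\mathrm{coef}}\,g_{j-2,j}$) and only at the end deduce $g_{i,j}'\in I'$ from $x_{j-1}g_{i,j}'\in I'$ via primality. That last deduction is fine (and closedness of $I'$ makes the limit argument available), but the heart of the matter --- that at \emph{every} stage the residual error is congruent mod $I$ to $x_{j-1}$ times an element of $S\llbracket x_{i+1},\dots,x_{j-1}\rrbracket$ --- is asserted, not proved, and it is not ``directly analogous'' to the proof of Theorem \ref{thm:shifting}, which never has to divide by $x_{j-1}$.

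Concretely, already at the first stage your divisibility claim needs every monomial of $h_{i,j-1}$ to be divisible by some $x_l$ with $i+1\leq l\leq j-2$, so that $x_lx_j\equiv M_{l,j}\pmod{I}$ supplies the factor $x_{j-1}^{a_{j-1}-1}$. The inductive hypothesis only gives $h_{i,j-1}\in S\llbracket x_{i+1},\dots,x_{j-2}\rrbracket$, which allows a constant term $s_0$; the resulting summand $s_0x_j$ is \emph{not} divisible by $x_{j-1}$ modulo $I$ (in $k\llbracket x_1,\dots,x_e\rrbracket/(I+(x_{j-1}))$ one only gets $x_{j-2}x_j=0$, not $x_j=0$), so the iteration would stall unless you separately rule out such terms or absorb them differently. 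At higher $t$-degrees the situation is worse: correcting mod-$I$ replacements such as $x_ix_j\mapsto M_{i,j}$ inside $I'$ forces you to use lifts of various $g_{l,j}$ to $I'$ whose $t$-tails are not controlled by the induction, and you give no argument that the new error is again of the required shape, nor that the corrections stay in the middle-variable subring. So the proposal is a plausible alternative plan, but its inductive step is not established; the paper's route --- constrain an existing flat lift via the syzygy, compacting, and shifting --- is what actually closes the argument.
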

\begin{proof}
We will use induction on $j-i$. For $j-i=2$, this is just the previous theorem.

Let it be known for all $g_{i,j}$ with $j-i<m$. First, it is clear that elements of this form exist in $I'$ for some $h_{i,j}\in S\llbracket  x_1,\dots,x_e\rrbracket $, as elements in $I$ lift to elements in $I'$. We can also assume that $h_{i,j}$ is in compact form. Let $j-i=m$ and consider
\begin{align*}
R\ni 0 &= x_{j-2}g_{i,j}'-x_ig_{j-2,j}'-x_{j-1}^{a_{j-1}-1}g_{i,j-1}'\\
&= t\left( x_{j-2}h_{i,j}-x_ih_{j-2,j}-x_{j-1}^{a_{j-1}-1}h_{i,j-1} \right)\\
&= x_{j-2}h_{i,j}-x_ih_{j-2,j}-x_{j-1}^{a_{j-1}-1}h_{i,j-1},
\end{align*}
i.e.
\begin{align*}
x_{j-2}h_{i,j}=x_i\underbrace{h_{j-2,j}}_{=x_{j-2}c_j+x_{j-1}h_{j-1}}+x_{j-1}^{a_{j-1}-1}\underbrace{h_{i,j-1}}_{\in S\llbracket  x_i,\dots,x_{j-2}\rrbracket }.
\end{align*}
Compacting the right side of this equation leaves us with monomials in $S\llbracket  x_i,\dots,x_{j-1}\rrbracket $, where any monomial containing $x_{j-1}$ is either $x_{j-1}^{a_{j-1}-1}$ or also divisible by $x_{j-2}$. On the other hand, if $h_{i,j}$ would contain any term divisible by some $x_l$ with $l\geq j+1$, then after compacting $x_{j-2}h_{i,j}$ we would still have a term divisible by some $x_{l'}$, with $j\leq l'\leq l$. If $h_{i,j}$ contains a monomial constant in $x_l$ for all $l\geq j+1$ but divisible by $x_j$, then after compacting $x_{j-2}h_{i,j}$ would contain a term that is a multiple of $x_{j-1}$ but constant in $x_{j-2}$. This shows that $h_{i,j}$ must be constant in all $x_{l}$ for $l\geq j$.

After fully shifting to the right, we can make a symmetric argument to show that $h_{i,j}$ must be constant in all $x_{l}$ for $l\leq i$. Fully shifting back to the left adds the term $x_ic_j$ to $g_{i,j}'$, which we write separately to get the form claimed by the corollary. 
\end{proof}


\section{Singularities of the Deformation}\label{sec:sing_of_def}
We now come to the main result of this article.

In the previous section we established that every deformation of a toric surface singularity is isomorphic to a deformation of the form
\begin{align*}
S\llbracket  x_1,\dots,x_e\rrbracket /I',
\end{align*}
where $I'$ contains elements $g_{i-1,i+1}'$, $i=2,\dots,e-1$ of the form
\begin{equation*}
g_{i-1,i+1}' = x_{i-1}(x_{i+1}+tc_{i+1})-x_i^{a_i}+tx_ih_i
\end{equation*}
for some $c_{i+1}\in S$ and $h_i\in S[x_i]$, $\deg_{x_i} h_i < a_i$.

We will now use this standardized form to calculate the singularities of the generic fiber of the deformation.\\

\begin{theorem}\label{thm:toric_to_toric}
Let $X$ be surface with at worst toric singularities and let $\mathcal{X}$ be a proper deformation of $X$ over a complete DVR $S$. Then the geometric generic fiber of $\mathcal{X}$ over $S$ contains at worst toric surface singularities.
\end{theorem}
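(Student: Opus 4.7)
\textbf{Plan for proof of Theorem \ref{thm:toric_to_toric}.}

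My plan is to combine the normal form from Theorem~\ref{thm:shifting} and its corollary with the classification in Theorem~\ref{thm:toric_eq}. By Theorem~\ref{thm:proper_to_sing_def} it suffices to prove the following local statement: if $R = S\llbracket x_1,\ldots,x_e\rrbracket/I'$ is a singularity-deformation of an isolated toric surface singularity $X$, and if $q$ is a closed singular point of the geometric generic fiber $\Spec R \otimes_S K$ with $K = \overline{\Quot(S)}$, then $\widehat{\mathcal{O}_{\Spec R,q}}$ is formally isomorphic to the completion of a toric surface singularity over $K$ (smoothness would contradict the singularity of $q$). By Theorem~\ref{thm:shifting}, I may assume $I'$ contains the relations $g'_{i-1,i+1} = x_{i-1}(x_{i+1}+tc_{i+1})-P_i(x_i)$ for $i=2,\ldots,e-1$, where $P_i(x_i) := x_i^{a_i} - tx_ih_i(x_i) \in S[x_i]$ is monic of degree $a_i$, together with the analogous $g'_{i,j}$ for $j-i>2$ from the corollary.

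Writing $q = (\lambda_1,\ldots,\lambda_e) \in K^e$ and setting $\mu_{i+1} := \lambda_{i+1} + tc_{i+1}$, I first shift $y_i := x_i - \lambda_i$ so that $q$ is the origin. Using the point equation $\lambda_{i-1}\mu_{i+1} = P_i(\lambda_i)$, the centered relation becomes
\[
g'_{i-1,i+1} \;=\; y_{i-1}y_{i+1} + \lambda_{i-1}y_{i+1} + \mu_{i+1}y_{i-1} - Q_i(y_i),
\]
with $Q_i(y_i) := P_i(\lambda_i+y_i)-P_i(\lambda_i)$ vanishing at $0$. I set $E := \{i \in \{2,\ldots,e-1\} : \lambda_{i-1} = \mu_{i+1} = 0\}$. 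For $i \in E$ one has $P_i(\lambda_i)=0$, so $\lambda_i$ is a root of $P_i$ of some multiplicity $m_i \geq 1$, and $Q_i(y_i) = y_i^{m_i}u_i(y_i)$ with $u_i \in K[y_i]$ and $u_i(0) \neq 0$.

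For each $i \notin E$, the centered relation has a nonzero linear coefficient on $y_{i-1}$ or $y_{i+1}$; the formal implicit function theorem then lets me solve for that variable as a power series and eliminate it from the completed local ring, at the cost of substituting the solution into the other generators of $I'$. For each $i \in E$, the substitution $y_{i-1}\mapsto y_{i-1}/u_i(y_i)$ is a valid change of coordinates (since $u_i$ is a unit in $K\llbracket y_i\rrbracket$) which brings $g'_{i-1,i+1}$ into the form $y_{i-1}y_{i+1} - y_i^{m_i}$ up to a unit. Carrying these operations out in a compatible order produces a presentation
\[
\widehat{\mathcal{O}_{\Spec R,q}} \;\cong\; K\llbracket z_1,\ldots,z_r\rrbracket / J
\]
in which $J$ is prime (by the primality result of Section~\ref{sec:deformations} and the isolated-singularity hypothesis), has height $r-2$ (from the 2-dimensionality of the geometric generic fiber), and contains elements of the form $z_{j-1}z_{j+1}-z_j^{b_j}$ for every intermediate $j$. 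Theorem~\ref{thm:toric_eq} then forces $J$ to be either $(z_2,\ldots,z_{r-1})$, contradicting the singularity of $q$, or precisely the ideal defining a toric surface singularity of embedding dimension $r$, which is the conclusion sought.

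The hard part will be arranging the eliminations and unit-absorptions of the previous paragraph consistently. When a variable is eliminated via one $g'_{i-1,i+1}$, its implicit expression must be substituted into the neighboring relations, which can spoil their normal form; and a unit-absorption $y_{i-1}\mapsto y_{i-1}/u_i(y_i)$ for one $i \in E$ re-enters the adjacent relation $g'_{i-2,i}$, changing the shape of the corresponding $u_{i-1}$. A careful combinatorial/inductive analysis of the ``blocks'' of consecutive $E$-indices, interspersed with $E^c$-segments, is required, and the higher generators $g'_{i,j}$ ($j-i>2$) from the corollary to Theorem~\ref{thm:shifting} must be invoked to force auxiliary variables into $J$ and thereby cut the embedding dimension down to the correct value $r$. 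Finally, working throughout with the substitution $y_{i-1}\mapsto y_{i-1}/u_i(y_i)$ rather than with $m_i$-th root extractions avoids any positive-characteristic obstruction from Hensel's lemma.
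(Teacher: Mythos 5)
Your high-level strategy matches the paper's exactly: reduce to singularity-deformations via Theorem~\ref{thm:proper_to_sing_def}, use the normal form from Theorem~\ref{thm:shifting} and its corollary, eliminate variables whose relations have linear parts, absorb unit factors into the variables, and invoke Theorem~\ref{thm:toric_eq}. The difference is in how the elimination and the bookkeeping are organized, and this is precisely where your proposal has a genuine gap.

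You flag the "combinatorial/inductive analysis of blocks of consecutive $E$-indices" as the hard part and leave it unresolved; this is not a detail but the crux of the argument. Working relation-by-relation with $g'_{i-1,i+1}$ and the implicit function theorem, as you do, can leave behind a disconnected set $E$ of "unsolvable" indices, and it is not obvious how to conclude from there. The paper avoids this entirely by choosing $i_e$ minimal with $\lambda_{i_e}\neq 0$ and $i_1$ maximal with $x_{i_1}+tc_{i_1}\notin\mathfrak{p}$, and then eliminating using the \emph{long-range} relations $g'_{i_e,j}$ and $g'_{j,i_1}$ from the corollary to Theorem~\ref{thm:shifting}: since $x_{i_e}$ is a unit, $g'_{i_e,j}$ lets you solve directly for every $x_j$ with $j\geq i_e+2$ (inductively, in terms of $x_{i_e},\ldots,x_{j-1}$), and symmetrically for $j\leq i_1-2$. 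This kills all variables outside the single consecutive interval $[i_1-1,\,i_e+1]$ in one pass, with no blocks to reconcile; if $i_1>i_e$ you are left with two free variables and hence a smooth point, and otherwise the surviving relations for $i_1\leq i\leq i_e$ are exactly $x_{i-1}x_{i+1}-x_i^{a_i}+tx_ih_i$ because $\lambda_{i-1}=0$ and $\lambda_{i+1}+tc_{i+1}=0$ throughout that range. Your proposal does sense that the higher $g'_{i,j}$ "must be invoked to cut the embedding dimension down," but it treats this as an open problem rather than as the mechanism that makes the interval consecutive and the rest of the argument routine. Without that step, the claim that $J$ "contains elements of the form $z_{j-1}z_{j+1}-z_j^{b_j}$ for every intermediate $j$" is unjustified for the variables trapped between two $E$-blocks. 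The unit-absorption step you describe ($y_{i-1}\mapsto y_{i-1}/u_i(y_i)$) and the concluding appeal to Theorem~\ref{thm:toric_eq} are both correct and match the paper's closing moves.
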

\begin{proof}
By \ref{thm:proper_to_sing_def}, it is enough to show that singularity-deformations of toric surface singularities contain at worst toric surface singularities. We thereby assume $X$ to be a toric surface singularity and $\mathcal{X}$ a singularity deformation of $X$ over $S$.

As all schemes are affine, this can just as well be read as a statement in commutative algebra. Let $R:=S\llbracket x_1,\dots,x_e\rrbracket/I\otimes_S K$ be the ring corresponding to the geometric generic fiber and let $\p=(x_1-\lambda_1,\dots,x_e-\lambda_e)\subseteq R$ be a maximal ideal.

We are interested in the singularity
\begin{align*}
\widehat{R_\p}\cong K\llbracket x_1-\lambda_1,\dots,x_e-\lambda_e\rrbracket/I'.
\end{align*}

Let $i_e$ be the minimal index such that $\lambda_{i_e}\neq 0$, meaning that $x_{i_e}$ becomes invertible in $\widehat{R_\p}$. Then by the previously established form of singularity-deformations of toric surface singularities (see \ref{thm:shifting}), for all $j\geq i_e+2$ we have that
\begin{align*}
x_j &= x_j-x_{i_e}^{-1}g_{i_e,j}'\\
&= x_j-x_{i_e}^{-1}\left(x_{i_e}(x_j+tc_j)-x_{{i_e}+1}^{a_{i+1}-1}\left(x_{{i_e}+2}^{a_{{i_e}+2}-2}\cdots x_{j-2}^{a_{j-2}-2}\right)x_{j-1}^{a_{j-1}-1} + th_{{i_e},j}\right)\\
&= tc_j-x_{i_e}^{-1}x_{{i_e}+1}^{a_{{i_e}+1}-1}\left(x_{{i_e}+2}^{a_{{i_e}+2}-2}\cdots x_{j-2}^{a_{j-2}-2}\right)x_{j-1}^{a_{j-1}-1} + x_{i_e}^{-1}th_{{i_e},j}\in \widehat{R_\p}.
\end{align*}
As we can write $x_j$ in terms of $x_{i_e},\dots,x_{j-1}$, for all $j\geq i_e+2$, projecting onto the variables $x_1,\dots,x_{i_e+1}$ is an isomorphism. This means that we have
\begin{align*}
\widehat{R_\p}\cong K\llbracket x_1,\dots,x_{i_e-1},x_{i_e}-\lambda_{i_e},x_{i_e+1}-\lambda_{i_e+1}\rrbracket /I'.
\end{align*}

Similarly, let $i_1$ be the maximal index such that $\lambda_{i_1}\neq -tc_{i_1}$ or $x_{i_1}+tc_{i_1}\notin\p$. Then we can write every $x_j$ with $j\leq i_1-2$ in terms of $x_{j+1},\dots,x_{i_1}$.

If $i_1>i_e$, then we can write every $x_j$ in terms of $x_{i_1-1},x_{i_1}$ (or alternatively $x_{i_e},x_{i_e+1}$) and as there is no relation between them we are in the smooth case. Otherwise, we have $0=\lambda_i=-tc_i$ for all $i_1<i<i_e$, and
\begin{align*}
\widehat{R_\p}\cong K\llbracket x_{i_1-1},\dots,x_{i_e-1},x_{i_e}-\lambda_{i_e},x_{i_e+1}-\lambda_{i_e+1}\rrbracket /I',
\end{align*}
where $I'$ contains elements
\begin{align*}
x_{i-1}x_{i+1}-x_i^{a_i}+tx_ih_i
\end{align*}
for $i=i_1,\dots,i_e-2$ and
\begin{align*}
x_{i-1}(x_{i+1}-\lambda_{i+1})-x_i^{a_i}+tx_ih_i
\end{align*}
for $i=i_e-1,i_e$.

After a change of variable (or shifting to the right at $i_e-1$ and $i_e$), we can additionally assume $\lambda_{i_e}=\lambda_{i_e+1}=0$.\footnote{This will also change $h_{i_e}$ slightly and we might now have $h_{i_e}\in K[x_{i_e}]$ instead of $h_{i_e}\in S[x_{i_e}]$, but we need not take note of that.}

We have reduced to the case
\begin{align*}
\widehat{R_\p}\cong K\llbracket x_{i_1-1},\dots,x_{i_e+1}\rrbracket/I'.
\end{align*}
Furthermore, $I'$ contains
\begin{align*}
x_{i-1}x_{i+1}-x_i^{a_i}+tx_ih_i=x_{i-1}x_{i+1}-x_i^{b_i}(x_i^{a_i-b_i}+t\frac{h_i}{x_i^{b_i-1}}),&&\forall i=i_1,\dots,i_e
\end{align*}
where $b_i-1$ is the lowest $x_i$-degree among monomials in $h_i$. The factor $x_i^{a_i-b_i}+t\frac{h_i}{x_i^{b_i-1}}$ becomes invertible when localizing away from $\p$. Therefore, sending $x_{i+1}$ to $x_{i+1}\left(x_i^{a_i-b_i}+t\frac{h_i}{x_i^{b_i-1}}\right)$ is an isomorphism in $\widehat{R_\p}$. This might lead to some unwanted factors in neighboring equation, these can be dealt with by multiplying $x_{i+1}$ and $x_{i+2}$ with suitable powers of the same factor. Starting at $i=i_1$ we can use this to remove all invertible factors from the equations and are left with
\begin{align*}
0=x_{i-1}x_{i+1}-x_i^{b_i}\in \widehat{R_\p}
\end{align*}
for $i=i_1,\dots,i_e$. Applying theorem \ref{thm:toric_eq}, we see that $\widehat{R_\p}$ is either smooth or a toric surface singularity.
\end{proof}

Close inspection of the proof of Theorem \ref{thm:toric_to_toric} reveals the following semicontinuity.

\begin{proposition}\label{prop:semicontinuity}
Let $y$ be a singularity of the generic fiber and \linebreak $x=X(a_1,\dots,a_e)$ an isolated toric surface singularity of the special fiber, such that $x$ is a specialization of $y$. Then $y$ is an isolated toric surface singularity, so that there exist $e'\in \NN$ and $b_1,\dots,b_{e'}\in\NN$ with $y\cong X(b_1,\dots,b_{e'})$. The proof above shows that we may choose $e'$ and the $b_i$ such that
\begin{itemize}
\item $e'\leq e$ \qquad\qquad and
\item $b_i\leq a_{i+l}$
\end{itemize}
for some $0\leq l\leq e-e'$ and all $i=1,\dots, e'$.

When seeing $x$ and $y$ as quotient singularities by $\mu_n$ and $\mu_{n'}$ respectively, this implies $n'\leq n$. This means if $x$ deforms to $y$, then the length of the associated group scheme to $y$ is less than or equal to that of $x$. 
\end{proposition}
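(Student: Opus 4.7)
The plan is to trace through the proof of Theorem \ref{thm:toric_to_toric} to extract numerical bounds on $e'$ and the $b_i$, and then use the tridiagonal-determinant formula for the length of $\mu_n$ to upgrade these termwise inequalities to $n'\leq n$. From that proof, after the reductions, the singularity $\widehat{R_\p}$ is presented as $K\llbracket x_{i_1-1},\dots,x_{i_e+1}\rrbracket/I'$, with $I'$ containing $x_{i-1}x_{i+1}-x_i^{b_i}$ for $i=i_1,\dots,i_e$. By Theorem \ref{thm:shifting} the polynomial $h_i\in S[x_i]$ satisfies $\deg_{x_i}h_i<a_i$, and $b_i-1$ is the minimal $x_i$-degree among its monomials, so $b_i\leq a_i$. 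Since $1\leq i_1\leq i_e\leq e$, the resulting toric surface singularity has $e':=i_e-i_1+1\leq e$ exponents, and setting $l:=i_1-1$ and relabeling $b_k:=b_{i_1+k-1}$ yields $0\leq l\leq e-e'$ together with $b_k\leq a_{k+l}$ for $k=1,\dots,e'$, as claimed.

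For the length inequality, recall from the Hirzebruch--Jung description of cyclic quotient surface singularities that the length $n$ of the group scheme $\mu_n$ associated to $X(a_1,\dots,a_e)$ (with $a_i\geq 2$) equals the determinant $D(a_1,\dots,a_e)$ of the tridiagonal matrix with diagonal entries $a_i$ and off-diagonal entries $-1$. Two monotonicity facts about $D$ suffice. First, $D$ is strictly increasing in each argument: expanding along the $i$-th row exhibits $D(a_1,\dots,a_e)$ as an affine-linear function of $a_i$ with leading coefficient $D(a_1,\dots,a_{i-1})\cdot D(a_{i+1},\dots,a_e)$, a product of positive tridiagonal determinants. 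Second, removing a boundary entry decreases $D$: the recursion $D(a_1,\dots,a_e)=a_1D(a_2,\dots,a_e)-D(a_3,\dots,a_e)$ combined with an induction showing $D(a_2,\dots,a_e)\geq D(a_3,\dots,a_e)$ yields $D(a_1,\dots,a_e)\geq(a_1-1)D(a_2,\dots,a_e)\geq D(a_2,\dots,a_e)$ whenever $a_1\geq 2$, and symmetrically for the last entry. Chaining these two facts gives
\[n'=D(b_1,\dots,b_{e'})\leq D(a_{l+1},\dots,a_{l+e'})\leq D(a_1,\dots,a_e)=n.\]

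The main obstacle is the bookkeeping required to handle the case when some of the raw exponents $b_k$ read off from the proof are $\leq 1$: such exponents correspond either to a smooth factor (in which case $n'=1\leq n$ trivially) or to a non-canonical presentation of $y$ in higher embedding dimension. In the latter case one must first reduce to the canonical form via the identifications $X(\dots,1,\dots)\cong X(\dots-1,\dots-1,\dots)$ and $X(\dots,0,\dots)$ being smooth; each such reduction strictly decreases $e'$ and decreases adjacent exponents, so the inequalities $e'\leq e$ and $b_k\leq a_{k+l}$ persist, and the determinantal comparison then applies to the canonical form. Apart from this edge-case analysis, every ingredient is either an explicit consequence of Theorem \ref{thm:shifting} or a one-line induction on the tridiagonal determinant.
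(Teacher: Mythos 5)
Your first half is exactly the paper's (implicit) argument: the paper offers no separate proof of this proposition beyond ``close inspection'' of the proof of Theorem \ref{thm:toric_to_toric}, and reading off $b_i\leq a_i$ from $\deg_{x_i}h_i<a_i$, together with $e'=i_e-i_1+1$ and $l=i_1-1$, is precisely what is intended. The continuant/tridiagonal-determinant argument for $n'\leq n$ is a reasonable way to fill in the final assertion, which the paper leaves unproved, and your two monotonicity facts are correct for sequences all of whose entries are $\geq 2$.

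However, your treatment of the edge case where some raw exponent equals $1$ has a genuine gap. You claim that under the contraction $X(\dots,c,1,d,\dots)\cong X(\dots,c-1,d-1,\dots)$ the termwise domination by a (shifted) window of $(a_1,\dots,a_e)$ persists; this is false in general. Take the window $(5,2,5)$ and the raw sequence $(5,1,5)$, which is termwise dominated: its canonical form is $(4,4)$, which is dominated by no length-two window of $(5,2,5)$, and moreover $D(4,4)=15>9=D(5,2)=D(2,5)$, so the comparison ``canonical form versus a window of $a$'' that your argument relies on actually fails there (while the desired conclusion $15\leq D(5,2,5)=40$ still holds). So either you must show such configurations cannot arise from a singularity-deformation (nothing in Theorem \ref{thm:shifting} or the proof of Theorem \ref{thm:toric_to_toric} rules them out), or you must argue differently. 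A workable repair: the contraction preserves the continuant exactly, $D(\dots,c,1,d,\dots)=D(\dots,c-1,d-1,\dots)$, so $n'=D(b_{\mathrm{raw}})$ whenever the raw sequence contracts to a canonical form (and $n'=1$ trivially in the smooth case); one is then reduced to proving $D(b)\leq D(A)$ for termwise $b\leq A$ with $b_i\geq 1$ and $A_i\geq 2$. Note that your row-expansion proof of monotonicity does not extend verbatim to this situation, since subcontinuants of sequences containing $1$'s can vanish or be negative (e.g.\ $D(1,1)=0$, $D(1,1,1)=-1$), so replacing entries one at a time can temporarily decrease the determinant; a separate (e.g.\ strengthened inductive) argument is needed for this step.
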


As an application of the theorem, we obtain Riemenschneider's conjecture on isolated cyclic quotient singularities in full generality.

\begin{theorem}\label{thm:cyc_to_cyc}
Let $X$ be an isolated cyclic lrq singularity and let $\mathcal{X}$ be a proper deformation of $X$ over a complete DVR $S$. Then the geometric generic fiber $\mathcal{X}_{\overline{\eta}}$ of $\mathcal{X}\to \Spec S$ contains at worst isolated cyclic lrq singularities.
\end{theorem}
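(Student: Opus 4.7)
The plan is to split the argument along the dimension of $X$ and reduce the global question to a local one via Theorem \ref{thm:proper_to_sing_def}, whose proof is dimension-independent. For $\dim X \leq 1$ there is nothing to prove, since the $\mu_n$-invariants of $k\llbracket x\rrbracket$ are $k\llbracket x^n\rrbracket$, which is regular, so no non-smooth cyclic lrq singularities exist in these dimensions.

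In the case $\dim X = 2$, I would apply Proposition \ref{prop:toric_eq_cyclic} to identify isolated cyclic lrq singularities of dimension two with toric surface singularities, both over the residue field $k$ and over the algebraically closed field $K = \overline{\Quot(S)}$. Theorem \ref{thm:toric_to_toric} then shows directly that the geometric generic fiber $\mathcal{X}_{\overline{\eta}}$ contains at worst toric surface singularities, and applying Proposition \ref{prop:toric_eq_cyclic} in the other direction gives the required cyclic lrq conclusion. Isolation is automatic, since it is built into the definition of toric surface singularity adopted in Section \ref{sec:tor_surf_sings}.

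For $\dim X \geq 3$, the approach is to invoke rigidity: an isolated cyclic lrq singularity in dimension at least three is of the form $\mathbb{A}^n/\mu_m$ with all weights coprime to $m$, and a direct Koszul-style computation shows $T^1 = 0$ for such a singularity. Over $\CC$ this is Schlessinger's rigidity cited in the introduction, and the relevant calculation transfers to positive and mixed characteristic (alternatively, one may appeal to the lrq analog used by Sato--Takagi). Rigidity implies that every singularity-deformation of $\widehat{\mathcal{O}_{X,x}}$ is formally trivial, so the completed local ring at any closed point of $\mathcal{X}_{\overline{\eta}}$ specializing to $x$ is either regular or isomorphic to the base change of $\widehat{\mathcal{O}_{X,x}}$ to $K$. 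Combined with Theorem \ref{thm:proper_to_sing_def}, this gives the claim.

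The main obstacle is therefore the characteristic-independent rigidity in the higher-dimensional case, which can be either verified by a direct $T^1$-calculation or imported from existing lrq-deformation theory. The genuinely new content of the theorem is the surface case, and once Theorem \ref{thm:toric_to_toric} is in hand the rest of the proof is an assembly of Proposition \ref{prop:toric_eq_cyclic}, Theorem \ref{thm:proper_to_sing_def}, and the higher-dimensional rigidity.
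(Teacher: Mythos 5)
Your proposal follows essentially the same route as the paper: split along dimension, handle dimension $\geq 3$ by rigidity of isolated (cyclic lrq) quotient singularities, and handle dimension $2$ by combining Proposition \ref{prop:toric_eq_cyclic} with Theorem \ref{thm:toric_to_toric}. The paper cites \cite{Sch71} for the equal characteristic zero rigidity and \cite{LMM25} for the positive and mixed characteristic case, and \cite{KSB88} for the characteristic-zero surface case; your inclusion of the trivial $\dim \leq 1$ case and the explicit appeal to Theorem \ref{thm:proper_to_sing_def} is harmless extra detail. One small caution: the jump from "$T^1=0$" to "every singularity-deformation over a complete DVR is formally trivial" is not a one-line consequence of a Koszul computation — it requires the usual formal deformation theory (pro-representability/effectivity of the versal deformation), and in positive/mixed characteristic the lrq machinery of \cite{LMM25} is precisely what makes this work. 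Since you offer the citation as an alternative, this is a presentational point rather than a gap, and your argument matches the paper's.
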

\begin{proof}
In dimension $\geq 3$, this is true by rigidity of quotient singularities. The characteristic zero case is due to \cite{Sch71}. The case of positive and mixed characteristic was proven in \cite{LMM25}.

In characteristic zero and dimension 2, this is a result in \cite{KSB88}.

Finally, theorem \ref{thm:toric_to_toric} together with proposition \ref{prop:toric_eq_cyclic} proves the case of dimension 2 in full generality, establishing the conjecture in positive and mixed characteristic, and providing a new proof for characteristic zero. 
\end{proof}

\bibliographystyle{alpha}
\bibliography{literature}

\begin{thebibliography}{LMM25}

\bibitem[EV85]{EV85}
Hélène Esnault and Eckart Viehweg.
\newblock Two dimensional quotient singularities deform to quotient
  singularities.
\newblock {\em Mathematische Annalen}, 271:439--450, 1985.

\bibitem[KSB88]{KSB88}
J{\'a}nos Koll{\'a}r and Nicholas~Ian Shepherd-Barron.
\newblock Threefolds and deformations of surface singularities.
\newblock {\em Inventiones mathematicae}, 91(2):299--338, Jun 1988.

\bibitem[LMM25]{LMM25}
Christian Liedtke, Gebhard Martin, and Yuya Matsumoto.
\newblock Isolated quotient singularities in positive characteristic.
\newblock {\em Astérisque}, 461, 2025.

\bibitem[Rie74]{Rie74}
Oswald Riemenschneider.
\newblock Deformationen von quotientensingularitäten (nach zyklischen
  gruppen).
\newblock {\em Mathematische Annalen}, 209:211--248, 09 1974.

\bibitem[Sch71]{Sch71}
Michael Schlessinger.
\newblock Rigidity of quotient singularities.
\newblock {\em Inventiones mathematicae}, 14(1):17--26, Mar 1971.

\bibitem[ST25]{ST21}
Kenta Sato and Shunsuke Takagi.
\newblock Arithmetic and geometric deformations of {$F$}-pure and {$F$}-regular
  singularities.
\newblock {\em Amer. J. Math.}, 147(2):561--596, 2025.

\end{thebibliography}

\end{document}